\theoremstyle{plain}
\newtheorem{theorem}{Theorem}
\newtheorem{lemma}[theorem]{Lemma}
\newtheorem{cor}[theorem]{Corollary}
\newcommand{\D}{\Delta}
\numberwithin{equation}{section}
\newcommand{\beq}[1]{\begin{equation}\label{#1}}
\newcommand{\eeq}{\end{equation}}
\title[Variations on the sum-product problem II]{Variations on the sum-product problem II}
\author[B. Murphy, O. Roche-Newton and I. Shkredov]{Brendan Murphy, Oliver Roche-Newton and Ilya D. Shkredov}
\address{B. Murphy: School of Mathematics, University Walk, Bristol, UK, BS8 1TW\\ 
and Heilbronn Institute of Mathematical Research}
\email{brendan.murphy@bristol.ac.uk  }
\address{O. Roche-Newton: 69 Altenberger Stra{\ss}e, Johannes Kepler Universit\"{a}t, Linz, Austria }
\email{o.rochenewton@gmail.com }
\address{I. D. Shkredov: Steklov Mathematical Institute, ul. Gubkina, 8, Moscow, Russia, 119991\\
IITP RAS,  
Bolshoy Karetny per. 19, Moscow, Russia, 127994\\
and 
MIPT, 
Institutskii per. 9, Dolgoprudnii, Russia, 141701
 }
\email {ilya.shkredov@gmail.com}
\begin{document}

\begin{abstract}
This paper is a sequel to a paper entitled \textit{Variations on the sum-product problem} by the same authors \cite{MRNS}. In this sequel, we quantitatively improve several of the main results of \cite{MRNS}, as well as generalising a method from \cite{MRNS} to give a near-optimal bound for a new expander. 

The main new results are the following bounds, which hold for any finite set $A \subset \mathbb R$:
\begin{align*}
\exists a \in A \text{ such that }|A(A+a)| &\gtrsim |A|^{\frac{3}{2}+\frac{1}{186}},
\\|A(A-A)| &\gtrsim |A|^{\frac{3}{2}+\frac{1}{34}},
\\|A(A+A)| &\gtrsim |A|^{\frac{3}{2}+\frac{5}{242}},
\\ |\{(a_1+a_2+a_3+a_4)^2+\log a_5 : a_i \in A \}| &\gg \frac{|A|^2}{\log |A|}.
\end{align*}
\end{abstract} 
\maketitle
\section{Introduction}

Throughout this paper, the standard notation
$\ll,\gg$ is applied to positive quantities in the usual way. Saying $X\gg Y$ means that $X\geq cY$, for some absolute constant $c>0$. The notation $X \approx Y$ denotes that $X \gg Y$ and $X \ll Y$ occur simultaneously. All logarithms in the paper are base $2$. We use the symbols $\lesssim, \gtrsim$ to suppress both constant and logarithmic factors. To be precise, we write $X \gtrsim Y$ if there is some absolute constant $c>0$ such that $X \gg Y/(\log X)^c$.

This paper considers several variations on the sum-product problem, all of which follow a common theme. The story of the sum-product problem begins with the Erd\H{o}s-Szemer\'{e}di conjecture, which states that for any finite $A \subset \mathbb Z$ and for all $\epsilon>0$,
$$\max \{ |A+A|,|AA| \} \geq c_{\epsilon} |A|^{2-\epsilon},$$
where
$$A+A:=\{a+b :a,b \in A \}$$
is the \textit{sum set} and
$$AA:=\{ab: a,b \in A\}$$
is the \textit{product set}. Although the conjecture was originally stated for subsets of the integers, it is also widely believed to be true over the reals. 

Modern literature on the sum-product problem often focuses on the problem of proving that certain sets defined by a combination of different arithmetic operations on elements of a set $A$ are always significantly larger than $|A|$. Growth results of this type are often referred to as \textit{expanders}.

In \cite{MRNS}, the authors considered several expander problems. The aim of this sequel is to improve the main results from \cite{MRNS}. 

One result that was established in \cite{MRNS} is that, for any $A \subset \mathbb R$,
\begin{equation}
|A(A+A)| \gtrsim |A|^{\frac{3}{2}+\frac{1}{178}}.
\label{mainold}
\end{equation}
This result gave a small quantitative improvement on the inequality $|A(A+A)| \gg |A|^{3/2}$, which follows from a simple application of the Szemer\'{e}di-Trotter Theorem (see \cite[Lemma 3.2]{RNZ} for a formal proof). 

The exponent $3/2$ arises often as a threshold for what is achievable in sum-product type problems by using the Szemer\'{e}di-Trotter Theorem in the simplest way that we know of, essentially copying the arguments of Elekes \cite{E}. To improve on these fundamental results, refinements of these techniques have been required. See for example, \cite{LRN} and \cite{SS}. Most of these basic sum-product results with exponent $3/2$ have by now been improved in the Euclidean setting, but some remain out of reach. For example, a simple application of the Szemer\'{e}di-Trotter Theorem (see for example Exercise 8.3.3 in \cite{TV}) yields the bound $|AA+A| \gg |A|^{3/2}$, and no improvement is known.

The main new theorem that we prove in this paper is another example of a result that breaks the $3/2$ threshold.

\begin{theorem} \label{thm:main1} Let $A \subset \mathbb R$ be finite. Then, there exists $a \in A$ such that
$$|A(A+a)| \gtrsim |A|^{\frac{3}{2}+\frac{1}{186}} .$$
\end{theorem}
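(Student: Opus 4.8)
The plan is to fix a popular slope and then run an Elekes--type incidence argument, but with the crucial refinement that we exploit an \emph{energy} bound rather than just a cardinality bound. Concretely, for $a\in A$ consider the point set $P_a = (A+a)\times A(A+a)$ and the line set $L = \{ \ell_{b,c} : b\in A, c\in A\}$ where $\ell_{b,c}$ is the line $y = bc(x-a) $ --- wait, more precisely we want lines through pairs so that each line $\ell$ determined by $b$ and a ratio contains many points of the form $(b+a, b(b+a)\cdot t)$; the point is that the quantity $|A(A+a)|$ controls the number of rich lines. Summing over all $a\in A$ and applying the Szemer\'edi--Trotter theorem in the standard way gives $|A(A+a)| \gg |A|^{3/2}$ on average, so to beat $3/2$ we need to extract extra structure when the Szemer\'edi--Trotter bound is nearly tight.

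The key step, following the strategy of \cite{MRNS} and the works \cite{LRN}, \cite{SS} cited above, is a dichotomy. Either $|A(A+a)|$ is already large for some $a$ (and we are done), or for \emph{every} $a\in A$ the product set $A(A+a)$ is small, which forces the multiplicative energy of the shifted sets $A+a$ to be large for all $a$ simultaneously. I would then aggregate this over $a \in A$: the sum over $a$ of the multiplicative energy $\mathsf{E}^\times(A, A+a)$, or an appropriate third--moment version of it, can be interpreted as the number of solutions to a system of equations in elements of $A$, and this in turn can be bounded above using Szemer\'edi--Trotter (or a point--line incidence bound for a Cartesian product, in the spirit of the collinear--triple estimates). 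Comparing the resulting upper bound with the lower bound coming from the assumption that every $A(A+a)$ is small yields a contradiction unless $|A|^{3/2}$ is improved by a definite power; optimizing the exponents in this comparison is where the fraction $1/186$ comes from.

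More concretely, the engine should be an estimate of the form: the number of collinear triples (or, after a projective change of variables, the relevant energy) in $A \times A$ is $\ll |A|^4 \log|A|$ by Szemer\'edi--Trotter, and one leverages a third--energy/higher--moment refinement --- e.g. bounding $\sum_{x} r^2(x)$ or $\sum_x r^3(x)$ for the representation function of an appropriate difference/ratio set --- to improve the exponent. One inserts the hypothesis ``$|A(A+a)| \le K|A|$ for all $a$'' into this count: small product sets make representation functions concentrated, which inflates the energy, and the tension between the combinatorial upper bound and this concentration lower bound closes the argument. This is precisely the mechanism by which \eqref{mainold}--type bounds were obtained, and Theorem~\ref{thm:main1} should come from running it with the sets $A+a$ in place of $A$ and keeping careful track of the averaging over $a$.

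The main obstacle I expect is the bookkeeping in the averaging step: we need a bound on $A(A+a)$ that is good for \emph{some} $a$, but the energy inequalities naturally produce information that is averaged over $a$, and converting an ``on average over $a$'' statement back into ``there exists $a$'' requires the energy identities to be set up so that the worst case (all $|A(A+a)|$ equal) is exactly the extremal case --- which it essentially is, by convexity. A secondary difficulty is that the higher--energy (third--moment) Szemer\'edi--Trotter input has to be applied to a set which is itself a Cartesian product twisted by the shifts, so one must verify that no degenerate configurations (many points on a common line, or many concurrent lines) spoil the incidence bound; handling these degenerate cases, typically by a pigeonhole/popularity argument that passes to a large structured subset of $A$, is where most of the technical work will lie. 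The final exponent $\tfrac32+\tfrac1{186}$ is then just the output of optimizing the free parameter $K$ (the assumed size of the product sets) against the power of $|A|$ in the energy bound.
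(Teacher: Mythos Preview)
Your proposal correctly identifies one of the two main ingredients, but it is missing the other, and without it the argument cannot go beyond the exponent $3/2$.

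The step you describe --- sum $E^{\times}(A,A+a)$ over $a\in A$, interpret the sum in terms of collinear triples in $(A\cup C)\times(A\cup C)$, and bound it via Szemer\'edi--Trotter --- is exactly Lemma~\ref{pinned1} of the paper. After Cauchy--Schwarz this yields Corollary~\ref{pinned}: there exists $a\in A$ with
\[
|A(A+a)| \gtrsim \frac{|A|^3}{E^{\times}(A)^{1/2}} = K^{1/2}|A|^{3/2},\qquad K:=\frac{|A|^3}{E^{\times}(A)}.
\]
This is strictly better than $|A|^{3/2}$ only when $K$ is large, i.e.\ when $E^{\times}(A)$ is \emph{small}. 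Your proposed dichotomy (``all $A(A+a)$ small $\Rightarrow$ summed energy large $\Rightarrow$ contradiction with the collinear-triple upper bound'') collapses here: the upper bound $\sum_a E^{\times}(A,A+a)\ll E^{\times}(A)^{1/2}|A|^2$ is itself in terms of $E^{\times}(A)$, so comparing upper and lower bounds only recovers $|A(A+a)|\gtrsim |A|^{3/2}$ with no gain. Invoking a third-moment variant of the same estimate does not change this, because both sides of the comparison scale the same way in $K$.

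What is missing is a second, independent bound that is good precisely when $K$ is \emph{small}. In the paper this is Lemma~\ref{Sh4} (packaged as Lemma~\ref{thm:bigenergy}): for \emph{every} nonzero $\alpha$,
\[
|A(A+\alpha)| \gtrsim \frac{E^{\times}(A)^2}{|A|^{58/13}\,d_*(A)^{7/13}} \quad\Longrightarrow\quad |A(A+a)|\gtrsim \frac{|A|^{20/13}}{K^{40/13}}.
\]
This relies on two pieces of machinery that do not appear in your sketch: the Konyagin--Shkredov parameter $d_*(A)$ together with a double-pigeonholing argument showing that large $E^{\times}(A)$ forces $d_*(A')$ small for a dense subset $A'$, and Shkredov's nontrivial additive-energy bound $E^+(A)\lesssim d_*(A)^{7/13}|A|^{32/13}$ (Theorem~\ref{hardthm}), used as a black box. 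The exponent $\tfrac{1}{186}$ comes from optimising $K$ between $K^{1/2}|A|^{3/2}$ and $|A|^{20/13}K^{-40/13}$, not from any parameter internal to the collinear-triple argument alone.
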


This improves on \cite[Theorem 2.9]{MRNS}, in which it was established that there is some $a \in A$ such that $|A(A+a)| \gg |A|^{3/2}.$

\subsection{Sketch of the proof of Theorem \ref{thm:main1}}

There are two main new lemmas which go into the proof of Theorem \ref{thm:main1}. The first of these is a lemma which states that there exists $a \in A$ such that $|A(A+a)| \geq |A|^{\frac{3}{2}+c}$, where $c>0$ is an absolute constant, unless the multiplicative energy\footnote{The multiplicative energy of $A$ is the number of solutions to the equation 
$$a_1a_2=a_3a_4,\,\,\,\,\,\,\,(a_1,a_2,a_3,a_4) \in A.$$
See section \ref{sec:notation} for more on the multiplicative energy and other types of energy.} of $A$ is essentially as large as possible. This lemma is proved using the Szemer\'{e}di-Trotter Theorem. In proving such a result, we improve qualitatively and quantitatively on some of the main lemmas from \cite{MRNS}. See the forthcoming Lemma \ref{pinned}.

The second new lemma (see the forthcoming Lemma \ref{Sh4} for a more precise statement) proves that if the product set of $A$ is very small then the bound $|A(A+\alpha)| \gg |A|^{\frac{3}{2}+c}$ holds for any non-zero $\alpha \in \mathbb R$. The proof of this is a little more involved, using techniques from additive combinatorics, and is closely related to the work of the third author in \cite{Sh4}. A non-trivial result bounding the additive energy for sets with small product set, also due to the third author in \cite{Sh3}, is used as a black box in the proof of this lemma. See the forthcoming Theorem \ref{hardthm} for the statement.

The Balog-Szemer\'{e}di-Gowers Theorem tells us that if a set $A$
has large multiplicative energy, then there is a large subset $A' \subset A$
such that $|A' A'|$
is small.  Therefore, one can then use the Balog-Szemer\'{e}di-Gowers Theorem and combine these two lemmas together to conclude the proof. However, we instead use a technical argument, building on the work of Konyagin and Shkredov \cite{KS}, which allows us to avoid an application of the Balog-Szemer\'{e}di-Gowers Theorem and thus to improve the aforementioned constant $c$.

\subsection{Further new results}

The proof of inequality \eqref{mainold} in \cite{MRNS} followed a similar structure to the above sketch, and the Balog-Szemer\'{e}di-Gowers Theorem was used to conclude the argument. Once again, we are able to make this argument more efficient by using tools from \cite{KS} to avoid using the Balog-Szemer\'{e}di-Gowers Theorem, resulting in the following two results.

\begin{theorem} \label{thm:main2} Let $A \subset \mathbb R$ be finite. Then,
$$|A(A+A)| \gtrsim |A|^{\frac{3}{2}+\frac{5}{242}} .$$
\end{theorem}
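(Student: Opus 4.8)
The plan is to follow the two-regime strategy behind Theorem \ref{thm:main1} and behind \eqref{mainold}, but without pinning the element $a$ --- which leaves more room in the Szemer\'{e}di--Trotter step --- and with the application of the Balog--Szemer\'{e}di--Gowers Theorem replaced by an argument in the spirit of Konyagin and Shkredov \cite{KS}; this replacement is where the quantitative gain over \eqref{mainold} comes from. Write $E^{\times}(A)$ for the multiplicative energy of $A$.

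The first ingredient is an unpinned analogue of Lemma \ref{pinned}: there is an absolute constant $c>0$ such that either $|A(A+A)| \gtrsim |A|^{3/2+c}$, or else $E^{\times}(A)$ is within a logarithmic factor of its largest possible value $|A|^{3}$. This is obtained by applying the Szemer\'{e}di--Trotter Theorem to the two-parameter family of lines $\{(x,y):y=a(x+b)\}$ with $a,b\in A$ (rather than the one-parameter family available in the pinned setting of Lemma \ref{pinned}); there are $|A|^{2}$ of them, they are distinct, and each meets the point set $A\times A(A+A)$ in $|A|$ points, so the number of incidences is at least $|A|^{3}$, which already recovers $|A(A+A)|\gg|A|^{3/2}$. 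Running the standard near-extremal refinement of this incidence bound --- controlling the number of pairs of lines meeting at a common grid point --- then forces either growth of $A(A+A)$ or a near-extremal lower bound on $E^{\times}(A)$; in practice I would record this as a quantitative trade-off, roughly of the shape $|A(A+A)|^{\alpha}\,E^{\times}(A)^{\gamma}\gg|A|^{\beta}$ with explicit small exponents, for use in the final optimisation.

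The second ingredient is supplied by Lemma \ref{Sh4}: if $|AA|\le K|A|$ then $|A(A+a)|\gg|A|^{3/2+c'}$ for every nonzero $a$, with $c'=c'(K)>0$ increasing as $K$ decreases, its proof drawing on the additive energy estimate of Theorem \ref{hardthm}. Since $A(A+a)\subseteq A(A+A)$ for $a\in A$, this lower bound passes directly to $|A(A+A)|$. It remains to glue the two ingredients: suppose the first regime fails, so that $E^{\times}(A)$ is essentially $|A|^{3}$. Rather than invoke Balog--Szemer\'{e}di--Gowers, which would cost a polynomial factor in the governing parameter, I would pigeonhole over popular dilates and use the higher-energy bookkeeping of \cite{KS} to produce a subset $A'\subseteq A$ with $|A'|\gtrsim|A|$ on which the product set is essentially as small as $|A'|$ (or an equivalent structural statement), losing only logarithmic factors. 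Applying Lemma \ref{Sh4} to $A'$ then gives $|A(A+A)|\ge|A'(A'+a)|\gtrsim|A|^{3/2+c''}$ for a suitable $a\in A'$.

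Finally I would optimise the interplay between the two regimes --- the first being stronger the smaller $E^{\times}(A)$ is forced to be, the second being stronger the more tightly multiplicatively structured the subset it extracts --- and balancing them yields the claimed exponent $3/2+5/242$. The step I expect to be the main obstacle is the gluing: passing from ``$E^{\times}(A)$ essentially maximal'' to ``product set essentially minimal on a large subset'' with only logarithmic loss, rather than the polynomial loss inherent in Balog--Szemer\'{e}di--Gowers, is the technical heart and is exactly what the machinery of \cite{KS} is needed for; the only other real difficulty is the careful but routine bookkeeping required to make the exponents in the optimisation land precisely on $5/242$.
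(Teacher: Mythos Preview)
Your overall two-regime structure is right, and your first ingredient --- the trade-off $|A(A+A)|^2 E^{\times}(A)\gtrsim |A|^6$ coming from Szemer\'{e}di--Trotter (this is exactly Lemma~\ref{thm:oldmainlemma}, or equivalently Corollary~\ref{pinned} followed by $A(A+a)\subseteq A(A+A)$) --- is correct and is what the paper uses. The gluing via the Konyagin--Shkredov double-pigeonholing in place of Balog--Szemer\'{e}di--Gowers is also what the paper does.

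The gap is in your second ingredient. You propose to feed the structured subset $A'$ into Lemma~\ref{Sh4} and bound $|A'(A'+\alpha)|$. That is the proof of Theorem~\ref{thm:main1}, and it only yields the exponent $\tfrac{3}{2}+\tfrac{1}{186}$: combining $|A(A+A)|\gtrsim K^{1/2}|A|^{3/2}$ with Lemma~\ref{thm:bigenergy}'s $|A(A+\alpha)|\gtrsim |A|^{20/13}K^{-40/13}$ balances at $K=|A|^{1/93}$, giving $|A|^{140/93}=|A|^{3/2+1/186}$, not $|A|^{3/2+5/242}$. The point is that for Theorem~\ref{thm:main2} you have the extra freedom $|A(A+A)|\ge |A+A|$, so in the large-multiplicative-energy regime you should bound $|A+A|$ directly, not $|A(A+\alpha)|$. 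The paper does this via Lemma~\ref{thm:sh}, namely $|A'+A'|\gtrsim |A'|^{58/37}d_*(A')^{-21/37}$, which after the double-pigeonholing of Lemma~\ref{lem:doublePigeonholing} yields $|A+A|\gtrsim |A|^{58/37}K^{-42/37}$ (this is Lemma~\ref{thm:d*}). Balancing this against $K^{1/2}|A|^{3/2}$ gives $K=|A|^{5/121}$ and the claimed exponent $\tfrac{3}{2}+\tfrac{5}{242}$.

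In short: Lemma~\ref{Sh4} (which relies on Theorem~\ref{hardthm}) is the right tool for the pinned problem of Theorem~\ref{thm:main1}, but for $A(A+A)$ the sumset lower bound from Lemma~\ref{thm:sh} is quantitatively much sharper in the structured regime, and without it your optimisation cannot reach $5/242$.
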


\begin{theorem} \label{thm:main3} Let $A \subset \mathbb R$ be finite. Then,
$$|A(A-A)| \gtrsim |A|^{\frac{3}{2}+\frac{1}{34}} .$$
\end{theorem}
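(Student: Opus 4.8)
The plan is to run the same two-step scheme that underlies Theorem~\ref{thm:main1}, but attached to the full-strength Elekes incidence configuration for $A(A-A)$, which is cleaner than the ``pinned'' configuration behind Theorem~\ref{thm:main1} and is ultimately responsible for the larger exponent. Write $Q = A(A-A)$. Since $A-a \subseteq A-A$ for every $a \in A$, we have $A(A-a) \subseteq Q$, so it suffices either to bound $|Q|$ directly or to produce a single shift $a$ with $|A(A-a)|$ large; we may also assume $0 \notin A$, passing to $A \setminus \{0\}$ otherwise.

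First I would establish the incidence dichotomy. Apply the Szemer\'edi--Trotter theorem to the point set $A \times Q$ and to the family of $\approx |A|^2$ lines $\{\, y = a(x-b) : a,b \in A,\ a \neq 0 \,\}$, each of which meets $A \times Q$ in at least $|A|$ points, namely $(c, a(c-b))$ with $c \in A$. A dyadic analysis of the rich lines in the spirit of Lemma~\ref{pinned} then yields an estimate relating $|Q|$ to the multiplicative energy $E^{\times}(A)$ (roughly of the shape $|Q|^{s} E^{\times}(A)^{t} \gtrsim |A|^{u}$), whose upshot is a clean dichotomy: either $|Q| \gtrsim |A|^{3/2+c_0}$ for some absolute $c_0>0$, in which case we are done, or $E^{\times}(A)$ is forced to be nearly as large as its trivial maximum $|A|^3$ (with a little more care one also controls a third multiplicative energy $E_3^{\times}(A)$).

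It remains to treat the case in which $A$ has nearly maximal multiplicative energy. The naive route is: invoke Balog--Szemer\'edi--Gowers to find $A' \subseteq A$ with $|A'| \gtrsim |A|$ and $|A'A'| \ll K|A'|$, then invoke Lemma~\ref{Sh4} (which is powered by the additive-energy bound of Theorem~\ref{hardthm}) to get $|A'(A'+\alpha)| \gg |A'|^{3/2+c_1}$ for every nonzero $\alpha$; choosing $\alpha = -a'$ for a nonzero $a' \in A'$ gives $A'(A'-a') \subseteq A(A-A)$ and hence $|Q| \gtrsim |A|^{3/2+c_1}$. To improve the constant one must avoid the polynomial losses of Balog--Szemer\'edi--Gowers: following the technique of Konyagin and Shkredov~\cite{KS}, I would instead work with $E^{\times}(A)$ (and $E_3^{\times}(A)$) directly, using a dyadic pigeonholing on multiplicatively popular ratios together with an iteration/covering argument to extract small-product-set structure with only $|A|^{o(1)}$ loss, and then feed this into Lemma~\ref{Sh4} and Theorem~\ref{hardthm}. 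Balancing $c_0$ and $c_1$ against the loss in this extraction, and optimising the dyadic thresholds, should give $|A(A-A)| \gtrsim |A|^{3/2+1/34}$; the same scheme applied to the configuration for $A(A+A)$ yields Theorem~\ref{thm:main2}.

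The main obstacle is precisely this last step: carrying out the Konyagin--Shkredov-type transition from ``large multiplicative energy'' to ``usable small-product-set input for Lemma~\ref{Sh4}'' in a genuinely lossless way, rather than through Balog--Szemer\'edi--Gowers, and then tracking every constant and logarithmic factor through the final optimisation so that the two regimes balance at the exponent $1/34$ and not something weaker. A secondary, routine nuisance is handling the degenerate configurations --- $0 \in A$, the discarded zero-slope line, and the scenario where the extracted subset or the supply of usable shifts $\alpha$ is too small to run the argument.
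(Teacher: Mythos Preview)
Your two-regime architecture matches the paper's, and the first-regime bound is exactly Corollary~\ref{pinned}: writing $E^{\times}(A)=|A|^3/K$ one gets $|A(A-A)|\gtrsim K^{1/2}|A|^{3/2}$. But your diagnosis that the improvement over Theorem~\ref{thm:main1} comes from a ``cleaner'' incidence configuration in the first step is wrong --- the first-step bound is identical for both theorems --- and the tool you propose for the second regime is too weak to reach $1/34$.

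In the large-$E^{\times}(A)$ regime you propose to feed the Konyagin--Shkredov extraction into Lemma~\ref{Sh4} and Theorem~\ref{hardthm}. That is precisely what the paper does for Theorem~\ref{thm:main1} (packaged as Lemma~\ref{thm:bigenergy}), and it yields only $|A(A+\alpha)|\gtrsim |A|^{20/13}K^{-40/13}$; balancing against $K^{1/2}|A|^{3/2}$ gives $K=|A|^{1/93}$ and exponent $3/2+1/186$. Even an idealised lossless extraction followed by Theorem~\ref{hardthm} and Cauchy--Schwarz on $E^+$ gives at best $|A-A|\gtrsim |A|^{20/13}K^{-14/13}$, which balances only to $3/2+1/82$.

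The paper's second regime is different. It uses the trivial containment $a(A-A)\subseteq A(A-A)$ for any nonzero $a\in A$, so that $|A(A-A)|\ge |A-A|$, and then bounds $|A-A|$ directly. The double-pigeonhole extraction of Lemma~\ref{lem:doublePigeonholing} produces $A'\subseteq A$ with small $d_*(A')$, and then Lemma~\ref{thm:difflem} --- the bound $|A'-A'|\gg |A'|^{8/5}d_*(A')^{-3/5}$, proved in the appendix from the $E_{1.5}^+/E_3^+$ energy inequality of \cite{Li}, not from Theorem~\ref{hardthm} --- gives $|A-A|\gtrsim |A|^{8/5}K^{-6/5}$ (this is Lemma~\ref{thm:d*}). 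Balancing $K^{1/2}|A|^{3/2}$ against $|A|^{8/5}K^{-6/5}$ gives $K=|A|^{1/17}$ and the claimed exponent $3/2+1/34$. Neither Lemma~\ref{Sh4} nor Theorem~\ref{hardthm} enters this proof; the missing ingredient in your plan is Lemma~\ref{thm:difflem}.
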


Theorem \ref{thm:main2} gives an improvement on \eqref{mainold}, while Theorem \ref{thm:main3} gives an improvement on equation (49) in \cite{MRNS}.

Another of the main results in \cite{MRNS} was the bound
\begin{equation}
|A(A+A+A+A)| \gg \frac{|A|^2}{\log |A|}.
\label{mainold2}
\end{equation}
Note that \eqref{mainold2} is optimal, up to finding the correct power of the logarithmic factor, as can be seen by taking $A=\{1,2,\dots,N\}$. In the last of our new theorems, we follow a similar argument to prove the following, admittedly curious, expander bound.

\begin{theorem} \label{thm:main4} Let $A \subset \mathbb R^+$ be finite. Then,
$$|\{(a_1+a_2+a_3+a_4)^2+\log a_5 : a_i \in A \}| \gg \frac{|A|^2}{\log |A|} .$$
\end{theorem}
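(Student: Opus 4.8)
The plan is to adapt the proof of the analogous bound \eqref{mainold2}. First I would reformulate the statement. The set in question is $\{s^2 : s \in 4A\} + \log A$, where $4A := A+A+A+A$ and $\log A := \{\log a : a \in A\}$. Since $A \subset \mathbb{R}^+$ we have $4A \subset \mathbb{R}^+$, so $t \mapsto t^2$ is injective on $4A$ and $t \mapsto \log t$ is injective on $\mathbb{R}^+$; hence $|\{s^2 : s\in 4A\}| = |4A|$, $|\log A| = |A|$, and $0 \notin 4A$. The goal is therefore $|\{s^2:s\in 4A\} + \log A| \gg |A|^2/\log|A|$. As with \eqref{mainold2} this is sharp up to the logarithm, realised by $A = \{1,\dots,N\}$, for which $\{s^2:s\in 4A\} + \log A$ behaves like a shifted multiplication table, so the proof must genuinely separate the "spread out" case from the "structured" case.

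The argument splits on the size of the fourfold sumset. If $|4A| \geq |A|^2/\log|A|$ we are already finished, since $|\{s^2:s\in 4A\} + \log A| \geq |\{s^2:s\in 4A\}| = |4A|$. So the work is in the case $|4A| < |A|^2/\log|A|$, where (using $|A+A| \leq |4A|$ and then Plünnecke--Ruzsa) $A$ is additively structured and $4A$, $4A \pm 4A$ are all small; here I would mirror the corresponding part of the proof of \eqref{mainold2}. By Cauchy--Schwarz, $|\{s^2:s\in 4A\} + \log A| \geq |4A|^2 |A|^2 / \widetilde E$, where $\widetilde E := \#\{(s_1,a_1,s_2,a_2) : s_1,s_2 \in 4A,\ a_1,a_2\in A,\ s_1^2 + \log a_1 = s_2^2 + \log a_2\}$, so it suffices to prove $\widetilde E \lesssim |4A|^2$. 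Rewriting the equation as $(s_1-s_2)(s_1+s_2) = \log(a_2/a_1)$, the diagonal $s_1 = s_2$ contributes exactly $|4A|\cdot|A|$, and the remaining part is $\sum_{\mu \neq 1} d_A(\mu)\, M(\mu)$, where $d_A(\mu)$ counts representations $\mu = a/a'$ with $a,a' \in A$ and $M(\mu) := \#\{(s_1,s_2)\in (4A)^2 : s_1^2 - s_2^2 = \log\mu\}$. Bounding this is a sum--product statement: $d_A(\mu)$ is large only at ratios $\mu$ witnessing multiplicative structure of $A$, which conflicts with the additive structure of $A$ forced by $|4A|$ being small, while the smallness of $4A\pm 4A$ forces $M(\mu)$ to be small on average. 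Concretely I would decompose dyadically over the level sets $\{\mu : d_A(\mu) \approx \Delta\}$ — the single source of the logarithmic loss — and for each level estimate $\sum_{d_A(\mu)\approx\Delta} M(\mu)$ by a Szemer\'edi--Trotter incidence count between the few hyperbolae $\{xy = \log\mu\}$ (few because $\Delta$ is large) and the structured point set $\{(s_1-s_2,\,s_1+s_2) : s_1,s_2 \in 4A\}$, feeding in energy bounds for additively structured sets in the spirit of Konyagin--Shkredov \cite{KS}.

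I expect the main obstacle to be precisely this last step, as it is for \eqref{mainold2}: a bare application of Szemer\'edi--Trotter to the hyperbola/point incidence loses a polynomial factor when $|4A|$ is small, so the saving has to come from genuinely exploiting that $4A$ is a \emph{fourfold} sumset (hence strongly additively structured in this regime) rather than an arbitrary set of the same cardinality, probably by combining the incidence estimate with iterated sum--product/energy inequalities. The only features of the outer operations that enter are that $s\mapsto s^2$ is injective on $\mathbb{R}^+$ and that $s\mapsto s^2$ and $a\mapsto\log a$ are sufficiently nonlinear; this is why the scheme that proves \eqref{mainold2} adapts to give Theorem \ref{thm:main4}, and why the hypothesis $A\subset\mathbb{R}^+$ (which guarantees $\log$ is defined and $s\mapsto s^2$ is injective on $4A$, and keeps the hyperbolae off the axes) is needed.
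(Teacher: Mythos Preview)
Your plan diverges substantially from the paper's argument, and the step you flag as the ``main obstacle'' is a genuine gap, not a technicality.

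The paper does \emph{not} split on $|4A|$ or set up a hyperbola incidence problem at all. Instead it proves a direct analogue of Lemma~\ref{thm:oldmainlemma}: for any finite $A,B\subset\mathbb R$,
\[
E^+(A)\,\bigl|\{a+(b_1+b_2)^2:a\in A,\,b_i\in B\}\bigr|^2 \;\gg\; \frac{|A|^4|B|^2}{\log|B|}.
\]
This is obtained by Cauchy--Schwarz on the solution count of $a_1+(b_1+b_2)^2=a_2+(b_3+b_4)^2$, after which one must bound $\sum_x r_{(B+B)^2-(B+B)^2}^2(x)$. The point is that this is exactly a Guth--Katz distinct-distances energy (Theorem~\ref{thm:GK}), since $(b_1+b_2)^2+(b_7+b_8)^2=(b_3+b_4)^2+(b_5+b_6)^2$ after rearranging, giving the bound $O(|B|^6\log|B|)$. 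One then applies this lemma with $A$ replaced by $\log A$ and $B=A+A$, observes $E^+(\log A)=E^\times(A)$, and closes with Solymosi's bound $E^\times(A)\ll|A+A|^2\log|A|$. The $|A+A|^2$ factors cancel and the result drops out with no case analysis.

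Your route, by contrast, reduces to bounding $\sum_{\mu\neq 1} d_A(\mu)M(\mu)$ via Szemer\'edi--Trotter on hyperbolae against the point set $\{(s_1-s_2,s_1+s_2):s_i\in 4A\}$, supplemented by Konyagin--Shkredov energy refinements. You correctly note that naive Szemer\'edi--Trotter loses a polynomial factor here, and you hope to recover it from the fourfold-sumset structure --- but you do not say how, and this is precisely the content of the theorem. The paper states explicitly that it knows of no proof of the key lemma avoiding the Guth--Katz bound; so absent a concrete mechanism, your sketch is not a proof. Also, your description of the proof of \eqref{mainold2} as proceeding via a case split on $|4A|$ and hyperbola incidences is not how that argument goes in \cite{MRNS}: it too is a two-line combination of Lemma~\ref{thm:oldmainlemma} and Solymosi, with no case split. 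The actual new idea needed for Theorem~\ref{thm:main4} is to replace the collinear-triples input behind Lemma~\ref{thm:oldmainlemma} by the Guth--Katz energy bound.
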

Note that the simple example whereby $A=\{1,2,\dots,N\}$ illustrates that Theorem \ref{thm:main4} is also optimal up to the logarithmic factor.

\subsection{The structure of the rest of this paper}

The rest of the paper will be structured as follows. Section \ref{sec:notation} will be used to introduce some notation and preliminary results that will be used throughout the paper. As mentioned above, there are two main new lemmas in this paper. Section \ref{sec:lemma1} is devoted to proving the first of these, and section \ref{sec:lemma2} the second. Section \ref{sec:thm1} is used to conclude the proof of Theorem \ref{thm:main1}. In section \ref{sec:thm2and3} the proofs of Theorem \ref{thm:main2} and \ref{thm:main3} are concluded. Section \ref{sec:5var} is devoted to the proof of Theorem \ref{thm:main4}.

\subsection{A note on an earlier preprint \cite{ORN}}
This paper supercedes the preprint \cite{ORN} by the second author.

\section{Notation and preliminary results} \label{sec:notation}
Given finite sets $A,B \subset \mathbb R$, the additive energy of $A$ and $B$ is the number of solutions to
the equation
$$a_1- b_1 = a_2-b_2$$
The additive energy is denoted $E^+(A,B)$. Let
$$r_{A-B}(x):=|\{(a,b) \in A \times B : a-b=x \}|.$$
Note that $r_{A-B}(x)=|A \cap (B+x)|$. The notation of the representation function $r$ will be used
with flexibility throughout this paper, with the information about the kind of representations
it counts being contained in a subscript. For example,
$$r_{(A-A)^2+(A-A)^2}(x)=|\{(a_1,a_2,a_3,a_4) \in A^4 : (a_1-a_2)^2+(a_3-a_4)^2=x \}|.$$
Note that
$$E^+(A,B)=\sum_{ x \in A-B } r_{A-B}^2(x).$$
The shorthand $E^+(A)=E^+(A,A)$ is used. 

The notion of energy can be extended to an
arbitrary power $k$. We define $E_k^+(A)$ by the formula
$$E_k^+(A)= \sum_{x \in A-A} r_{A-A}^k(x).$$
Similarly, the \textit{multiplicative energy} of $A$ and $B$, denoted $E^{\times}(A,B)$, is the number of solutions to the equation
$$\frac{a_1}{b_1}=\frac{a_2}{b_2}$$
such that $a_1,a_2 \in A$ and $b_1,b_2 \in B$.
For $x\neq 0$, let $A_x$ denote the set $A_x=A \cap x^{-1}A$ and note that $r_{A/A}(x)=|A_x|$.

A simple but important feature of energies is that the Cauchy-Schwarz inequality can be used to convert an upper bound for energy into a lower bound for the cardinality of a set. In particular, it follows from the Cauchy-Schwarz inequality that
\begin{equation}
E^{\times}(A,B) \geq \frac{|A|^2|B|^2}{|AB|}.
\label{CSbound}
\end{equation}

The notions of additive and multiplicative energy have been central in the literature on
sum-product estimates. For example, the key ingredient in the beautiful work of Solymosi \cite{So}, which until recently held the record for the best known sum-product estimate, is the
following bound:
\begin{theorem} \label{thm:soly}
For any finite $A \subset \mathbb R$,
$$E^{\times}(A) \ll |A+A|^2\log |A|.$$
\end{theorem}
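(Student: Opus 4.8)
The plan is to recover Theorem~\ref{thm:soly} via Solymosi's geometric argument. The first step is a reduction to the case $A \subset \R_{>0}$. Writing $A^{+} = A \cap (0,\infty)$ and $A^{-} = A \cap (-\infty,0)$ and splitting a solution of $a_1/b_1 = a_2/b_2$ according to the signs of the four entries, one sees that for each fixed sign of $\lambda$ the count $r_{A/A}(\lambda)$ decomposes into at most two pieces, each counting ratios from one sign class of $A$ over a (possibly different) sign class; expanding $E^{\times}(A) = \sum_{\lambda \ne 0} r_{A/A}(\lambda)^2$ accordingly and applying the Cauchy-Schwarz inequality in the shape $E^{\times}(A^{+},A^{-}) \le \sqrt{E^{\times}(A^{+})\,E^{\times}(A^{-})}$ yields $E^{\times}(A) \ll E^{\times}(A^{+}) + E^{\times}(A^{-})$. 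Since $|A^{\pm} + A^{\pm}| \le |A+A|$, $E^{\times}(-B) = E^{\times}(B)$, $|(-B)+(-B)| = |B+B|$, and the element $0$ contributes nothing to $E^{\times}$, it suffices to prove $E^{\times}(A) \ll |A+A|^2 \log|A|$ when $A$ consists of positive reals.

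So assume $A \subset \R_{>0}$. I would consider the point set $P = A \times A$, and for each slope $\lambda \in A/A$ the line $\ell_{\lambda} = \{(x,\lambda x) : x \in \R\}$ through the origin. A point $(b,a) \in P$ lies on $\ell_{\lambda}$ precisely when $a/b = \lambda$, so $\ell_{\lambda}$ carries exactly $r_{A/A}(\lambda) = |A_\lambda|$ points of $P$. Since $r_{A/A}(\lambda) \le |A|$ for every $\lambda$, a dyadic pigeonhole argument on $E^{\times}(A) = \sum_{\lambda} r_{A/A}(\lambda)^2$ produces a parameter $\rho \ge 1$ and a set of slopes $D = \{\lambda \in A/A : \rho \le r_{A/A}(\lambda) < 2\rho\}$ with
$$|D|\,\rho^{2} \gg \frac{E^{\times}(A)}{\log |A|}.$$

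The crux is a disjointness observation. List the slopes of $D$ in increasing order $\mu_1 < \mu_2 < \cdots < \mu_s$ and group them into the non-overlapping consecutive pairs $(\mu_1,\mu_2), (\mu_3,\mu_4), \dots$. Fix such a pair $(\mu,\mu')$ with $\mu < \mu'$. If $p \in P \cap \ell_{\mu}$ and $q \in P \cap \ell_{\mu'}$, then $p+q$ has positive coordinates and slope a strict convex combination of $\mu$ and $\mu'$, hence lying strictly between them; moreover, as $\ell_{\mu}$ and $\ell_{\mu'}$ are distinct lines through the origin, a point of $\R^{2}$ has at most one representation as a sum of a point of $\ell_{\mu}$ and a point of $\ell_{\mu'}$, so $(p,q) \mapsto p+q$ is injective. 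Thus this pair produces at least $\rho^{2}$ distinct points, all lying in the open angular sector strictly between $\ell_{\mu}$ and $\ell_{\mu'}$ and all belonging to $(A+A)\times(A+A)$. Since the chosen pairs are non-overlapping, the corresponding sectors are pairwise disjoint; summing $\rho^{2}$ over the $\sim |D|/2$ pairs gives
$$|A+A|^{2} \;\gg\; |D|\,\rho^{2} \;\gg\; \frac{E^{\times}(A)}{\log|A|},$$
which rearranges to the claimed bound. (The degenerate cases $s = 1$ or $\rho \ll 1$ are handled trivially, since then the contribution of $D$ to $E^{\times}(A)$ is $O(|A|^{2}) = O(|A+A|^{2})$.)

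The only steps I expect to require genuine care are the sign reduction in the first paragraph and the verification that non-overlapping consecutive pairs of slopes within one dyadic class yield disjoint angular sectors; once those are in place, the injectivity of $(p,q)\mapsto p+q$ and the inclusion of the sum points in $(A+A)^{2}$ are immediate. The dyadic pigeonholing is precisely the source of the logarithmic factor and is unavoidable in this approach: the cruder inequality $|A+A|^{2} \gg \sum_{i} r_{A/A}(\lambda_i)\, r_{A/A}(\lambda_{i+1})$ over all consecutive slopes cannot be compared term-by-term with $E^{\times}(A) = \sum_i r_{A/A}(\lambda_i)^{2}$ unless one first forces neighbouring representation counts to be of comparable size.
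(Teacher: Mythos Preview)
The paper does not actually prove Theorem~\ref{thm:soly}; it is quoted as a result of Solymosi \cite{So} and used as a black box (notably in the proof of Theorem~\ref{thm:main4}). So there is no in-paper argument to compare against.

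Your proposal is a correct reconstruction of Solymosi's original proof. The sign reduction is fine: for $\lambda>0$ one has $r_{A/A}(\lambda)=r_{A^+/A^+}(\lambda)+r_{A^-/A^-}(\lambda)$ and similarly for $\lambda<0$, so $(a+b)^2\le 2a^2+2b^2$ together with $E^{\times}(A^+,A^-)\le\sqrt{E^{\times}(A^+)E^{\times}(A^-)}$ gives $E^{\times}(A)\ll E^{\times}(A^+)+E^{\times}(A^-)$ as you claim. The geometric step is also sound; in fact you can use \emph{all} consecutive pairs $(\mu_i,\mu_{i+1})$ rather than only the odd-indexed ones, since the open sectors between consecutive slopes are already pairwise disjoint, but your non-overlapping grouping loses only a factor of $2$. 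The final paragraph correctly identifies why the dyadic step is needed: without it one only gets $|A+A|^2\gg\sum_i r_{A/A}(\mu_i)r_{A/A}(\mu_{i+1})$, which is not directly comparable to $\sum_i r_{A/A}(\mu_i)^2$.
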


A major tool that is used in this paper several times, both explicitly and implicitly, is the Szemer\'{e}di-Trotter Theorem. In particular, we will need the following result, which follows from a simple application of the Szemer\'{e}di-Trotter Theorem. See, for example, Corollary 8.8 in \cite{TV}.

\begin{lemma} \label{collinear}
Let $A \subset \mathbb R$ be a finite set. Then there are $O(|A|^4\log|A|)$ collinear triples in $A \times A$.
\end{lemma}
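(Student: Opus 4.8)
The plan is to sort the lines of the plane according to how many points of the grid $A \times A$ they contain, and then apply the Szemer\'{e}di--Trotter Theorem on each dyadic range. Write $n = |A|^2$ and think of $A \times A$ as a set of $n$ points in $\mathbb{R}^2$. By a collinear triple I mean an ordered triple of distinct points of $A \times A$ lying on a common line; the $O(n^2) = O(|A|^4)$ triples with a repeated coordinate can be set aside at the start, since they are already within the claimed bound. A line meeting $A \times A$ in exactly $k$ points carries $k(k-1)(k-2) \ll k^3$ such triples, so it is enough to bound $\sum_{\ell} k_\ell^{\,3}$, the sum running over lines $\ell$ containing $k_\ell \geq 2$ points of $A\times A$.

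The one structural fact about the grid that I would use is that \emph{every line contains at most $|A|$ points of $A \times A$}: a non-vertical line is the graph of an affine function, so it meets each of the $|A|$ vertical lines $\{x = a : a \in A\}$ at most once, while a vertical line meets $A \times A$ in either $0$ or $|A|$ points. Hence $2 \leq k_\ell \leq |A|$, so only the dyadic scales $2^j \leq k_\ell < 2^{j+1}$ with $1 \leq j \leq \log|A|$ occur --- that is, $O(\log|A|)$ scales in total. The Szemer\'{e}di--Trotter Theorem gives $O(n^2/2^{3j} + n/2^j)$ lines containing at least $2^j$ points of an $n$-point set, so the lines at scale $j$ account for
\[
\ll \left( \frac{n^2}{2^{3j}} + \frac{n}{2^j} \right) 2^{3j} \ll n^2 + n\,2^{2j}
\]
collinear triples. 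Summing over the $O(\log|A|)$ admissible scales yields $\ll n^2 \log|A| + n\cdot 2^{2\log|A|} \ll n^2\log|A| = |A|^4\log|A|$, which is the assertion.

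This is essentially a routine deduction, so I do not anticipate a real obstacle; the only point that must be handled correctly is the bound $k_\ell \leq |A|$, which is precisely what caps the number of dyadic scales at $O(\log|A|)$ and hence is the origin of the single logarithmic factor. It is worth noting that the naive estimate $\sum_\ell k_\ell^{\,3} \leq |A|\sum_\ell k_\ell^{\,2} \ll |A|\,n^2 = |A|^5$ --- using only that two distinct points lie on a unique line, so that $\sum_\ell k_\ell^{\,2} \ll n^2$ --- is far too weak, which is why the Szemer\'{e}di--Trotter Theorem is genuinely needed here.
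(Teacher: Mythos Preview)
Your argument is correct and is precisely the standard dyadic application of the Szemer\'{e}di--Trotter Theorem that the paper has in mind; the paper itself does not give a proof but simply cites Corollary~8.8 of Tao--Vu, where this same computation is carried out. Your identification of the grid bound $k_\ell \le |A|$ as the source of the single $\log|A|$ factor is exactly right.
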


In a recent paper of Konyagin and Shkredov \cite{KS}, a new characteristic for a finite set of
real numbers $A$ was considered. Define $d_*(A)$ by the formula
$$d_*(A)=\min_{t>0} \min_{\emptyset \neq Q, R \subset \mathbb R \setminus \{0\}} \frac{|Q|^2|R|^2}{|A|t^3},$$
where the second minimum is taken over all $Q$ and $R$ such that $\max \{|Q|,|R|\} \geq |A|$ and such that for every $a \in A$, the bound $|Q \cap aR^{-1}| \geq t$ holds. Konyagin and Shkredov proved the following lemma:
\begin{lemma}[Lemma 13, \cite{KS}] \label{thm:KSlem} For any $A,B \subset \mathbb R$ and any $\tau \geq 1 $
$$|\{x :r_{A-B}(x) \geq \tau \}| \ll \frac{|A||B|^2}{\tau^3} d_*(A).$$
\end{lemma}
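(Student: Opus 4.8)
The plan is to deduce the bound from the Szemer\'edi--Trotter theorem (the tool behind Lemma~\ref{collinear}) by means of a point--line incidence problem designed to feed the multiplicative data in $d_*(A)$ into an estimate for the popular difference set $P:=\{x : r_{A-B}(x)\geq\tau\}$. We may assume $0\notin A$ (otherwise no admissible pair $Q,R$ exists, so $d_*(A)=+\infty$), $P\neq\emptyset$, and $\tau\leq\min\{|A|,|B|\}$ (otherwise $P=\emptyset$). Fix $t>0$ and $Q,R\subset\mathbb R\setminus\{0\}$ attaining the minimum defining $d_*(A)$, and observe that the condition $|Q\cap aR^{-1}|\geq t$ is exactly the statement that every $a\in A$ has at least $t$ factorisations $a=qr$ with $q\in Q$, $r\in R$ (for a fixed $q$ the only candidate is $r=a/q$, and $a/q\in R$ is equivalent to $q\in aR^{-1}$). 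Since this condition and $\max\{|Q|,|R|\}\geq|A|$ are symmetric in $Q,R$, we may also assume $|R|\geq|A|$.

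Next I would build the incidence configuration. Take the point set $\Pi=Q\times B$, so $|\Pi|=|Q||B|$, and, for each $(r,x)\in R\times P$, the line $\ell_{r,x}=\{(X,Y):Y=rX-x\}$; since the slope $r$ and the intercept $-x$ are recovered from $\ell_{r,x}$, these lines are distinct and the family $L$ has size $|R||P|$. For $x\in P$ there are at least $\tau$ pairs $(a,b)\in A\times B$ with $a-b=x$, and each such $a$ has at least $t$ factorisations $a=qr$; for every such choice $rq-x=a-b=b$, so the point $(q,b)\in\Pi$ lies on $\ell_{r,x}\in L$, and the incident pair $\bigl((q,b),\ell_{r,x}\bigr)$ recovers $q,b,r,x$ (hence $a=qr$). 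Thus distinct choices give distinct incidences, and summing over $x\in P$ the number of incidences between $\Pi$ and $L$ is at least $|P|\tau t$. The Szemer\'edi--Trotter theorem then gives
\[ |P|\tau t\;\ll\;(|Q||B||R||P|)^{2/3}+|Q||B|+|R||P| . \]
When the first term dominates, raising to the power $3/2$, cancelling one factor of $|P|$, and substituting $|Q|^2|R|^2/(|A|t^3)=d_*(A)$ yields exactly
\[ |P|\;\ll\;\frac{|Q|^2|R|^2|B|^2}{\tau^3t^3}\;=\;\frac{|A||B|^2}{\tau^3}\,d_*(A). \]

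To finish, one must check that the two lower-order terms cannot spoil this. If $|Q||B|$ dominates, then $|P|\ll|Q||B|/(\tau t)$, and since $t\leq\min\{|Q|,|R|\}$ gives $t^2\leq|Q||R|$ while $\tau\leq|B|$ and $\tau\leq|A|\leq|R|$ give $\tau^2\leq|B||R|$, we obtain $\tau^2t^2\leq|B||Q||R|^2$, which is precisely what converts $|Q||B|/(\tau t)$ into the target bound $\frac{|B|^2|Q|^2|R|^2}{\tau^3t^3}$. If instead $|R||P|$ dominates, then $|R|\gg\tau t$, and combining this with $|R|\geq|A|$ and $t\leq|Q|$ forces $d_*(A)=\frac{|Q|^2|R|^2}{|A|t^3}\gg\frac{|Q|^2|R|\tau}{|A|t^2}\geq\frac{|Q|^2\tau}{t^2}\geq\tau$, whence $\frac{|A||B|^2}{\tau^3}d_*(A)\gg\frac{|A||B|^2}{\tau^2}\geq\frac{|A||B|}{\tau}\geq|P|$ by the trivial bound $|P|\leq|A||B|/\tau$ (which follows from $\sum_x r_{A-B}(x)=|A||B|$). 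I expect the one genuine design decision — and the step most likely to cause trouble if made carelessly — to be the choice of the lines $\ell_{r,x}$, which is exactly what couples the divisor structure of $A$ (through $Q,R$) to the additive popularity recorded by $x$; once that configuration is in place, the remainder is routine.
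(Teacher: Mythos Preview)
The paper does not give its own proof of this lemma (it is quoted as Lemma~13 of \cite{KS}, with the remark that the main tool is the Szemer\'edi--Trotter theorem), so there is nothing to compare against directly. Your argument is correct and is essentially the intended Szemer\'edi--Trotter proof: the key point is precisely the incidence configuration you chose, with points $Q\times B$ and lines $Y=rX-x$ indexed by $R\times P$, which feeds the factorisation data $|Q\cap aR^{-1}|\ge t$ into a lower bound of $|P|\tau t$ incidences; the dominant term then unwinds to $\frac{|A||B|^2}{\tau^3}d_*(A)$, and your treatment of the two lower-order Szemer\'edi--Trotter terms (using $t\le\min\{|Q|,|R|\}$, $\tau\le\min\{|A|,|B|\}$, the symmetry allowing $|R|\ge|A|$, and the trivial bound $|P|\le|A||B|/\tau$) is clean and correct. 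One cosmetic slip: in the incidence verification you wrote ``$rq-x=a-b=b$'' when you meant ``$rq-x=a-x=b$'' (since $a-b=x$); the conclusion is unaffected.
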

To put this in the language introduced in \cite{Sh2} and also used in \cite{KS}, this lemma states that every set $A$ is a Szemer\'{e}di-Trotter set with parameter $O(d_*(A))$. The main theoretical tool in the proof of Lemma \ref{thm:KSlem} is the Szemer\'{e}di-Trotter Theorem. Lemma \ref{thm:KSlem} generalises an earlier result in which the bound
\begin{equation}
|\{x :r_{A-B}(x) \geq \tau \}| \ll \frac{|A||B|^2}{\tau^3} d(A)
\label{KSold}
\end{equation}
was established, where $d(A)=\min_{C \neq \emptyset} \frac{|AC|^2}{|A||C|}$. See \cite[Lemma 7]{RRNS} for a proof. As pointed out in \cite{KS}, $d_*(A) \leq d(A)$, since for any non empty $C$ we can take $t=|C|$, $Q=AC$ and $R=C^{-1}$ in the definition of $d_*(A)$.

\section{A bound on sums of multiplicative energies with shifts} \label{sec:lemma1}

The aim of this section is to prove the first of the two main new lemmas of this paper. This is the following lemma, which gives an improvement of Lemma 2.4 in \cite{MRNS}, in the case when the sets involved are approximately the same size, unless the multiplicative energy is essentially as large as possible.

\begin{lemma} \label{pinned1} Let $ A,B,C \subset \mathbb R$ be a finite sets such that $|A| \approx |C|$. Then
$$\sum_{a \in A} E^{\times}(B,C-a) \ll E^{\times}(B)^{1/2}|A|^2\log^{1/2}|A|+|A|^3+|A||B|^2.$$
\end{lemma}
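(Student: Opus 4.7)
The plan is to reinterpret the left-hand side as a weighted incidence count, apply Cauchy--Schwarz to extract $E^\times(B)^{1/2}$, and bound the remaining factor by $|A|^2 \log^{1/2}|A|$ via the Szemer\'{e}di--Trotter theorem in the form of Lemma \ref{collinear}.

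First I would expand
$$\sum_a E^\times(B,C-a) = |\{(a,b_1,b_2,c_1,c_2) \in A \times B^2 \times C^2 : b_1(c_2-a) = b_2(c_1-a)\}|,$$
and separate the degenerate contributions where $b_1 = b_2$ (which forces $c_1 = c_2$) or where $c_i = a$. These are bounded trivially by $O(|A||B||C|)$, which, using $|A| \approx |C|$ together with the elementary AM--GM estimate $|A|^2|B| \leq |A|^3 + |A||B|^2$, is absorbed into the stated error terms. For the main contribution, I would parametrize by $\lambda = b_1/b_2$ with $b_1 \neq b_2$; the condition becomes $c_1 - a = \lambda(c_2-a)$, and $r_{B/B}(\lambda)$ counts the number of $(b_1,b_2) \in B^2$ with this ratio, so the main sum equals
$$\sum_\lambda r_{B/B}(\lambda)\, N(\lambda), \qquad N(\lambda) := |\{(a,c_1,c_2) \in A \times C^2 : c_1 - a = \lambda(c_2 - a)\}|.$$
Applying Cauchy--Schwarz in $\lambda$ gives
$$\sum_\lambda r_{B/B}(\lambda)\, N(\lambda) \leq E^\times(B)^{1/2} \left( \sum_\lambda N(\lambda)^2 \right)^{1/2}.$$

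It then remains to establish the key estimate $\sum_\lambda N(\lambda)^2 \ll |A|^4 \log|A|$. This quantity counts $6$-tuples $(a, a', c_1, c_2, c_3, c_4) \in A^2 \times C^4$ satisfying $(c_1-a)(c_4-a') = (c_2-a)(c_3-a')$; geometrically, it is the slope-energy of pairs $(d,p) \in D_A \times P_C$, where $D_A := \{(a,a) : a \in A\}$ and $P_C := C \times C$, counting ordered pairs $((d,p),(d',p'))$ whose connecting lines are parallel. For the diagonal contribution $a = a'$, this reduces to $\sum_a E^\times(C-a)$, which is precisely the number of ordered collinear triples with one vertex in $D_A$ and two in $P_C$, plus a trivial $|A||C|^2$ contribution from coincident pairs. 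Since $D_A \cup P_C \subseteq (A \cup C) \times (A \cup C)$ with $|A \cup C| \approx |A|$, Lemma \ref{collinear} gives the bound $O(|A|^4 \log |A|)$ for this case.

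The hard part will be the off-diagonal case $a \neq a'$, in which the pencil lines from $(a,a)$ and $(a',a')$ are parallel but distinct, so collinear triples cannot be invoked directly, and a naive Cauchy--Schwarz on the cross-energy $F(a,a') := \sum_\sigma k_a(\sigma) k_{a'}(\sigma)$ loses a factor of $|A|^{1/2}$. My plan is to rewrite the defining equation as $au + a'v = w$, where $u = c_4 - c_3$, $v = c_1 - c_2$, and $w = c_1 c_4 - c_2 c_3$ are determined by the $c_i$'s, and then view the points $(a,a') \in A \times A$ against the family of lines indexed by $(c_1,c_2,c_3,c_4) \in C^4$. A dyadic decomposition on the multiplicities of these lines, combined with the Szemer\'{e}di--Trotter theorem and a second invocation of Lemma \ref{collinear} to control the distribution of ``rich'' directions, should yield the same $O(|A|^4 \log |A|)$ bound. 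Combining these estimates with Cauchy--Schwarz and the boundary contributions then gives the claimed inequality.
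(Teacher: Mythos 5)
Your argument through the Cauchy--Schwarz step coincides with the paper's: the expansion, the degenerate-case cleanup (your AM--GM absorption of $|A||B||C|$ is a harmless variant of the paper's), and the reduction to proving $\sum_\lambda N(\lambda)^2 \ll |A|^4\log|A|$ are all the same. Where the proposal goes astray is in the geometric reading of $\sum_\lambda N(\lambda)^2$. You correctly identify this sum as counting tuples $(a,a',c_1,c_2,c_3,c_4) \in A^2 \times C^4$ with $(c_1-a)(c_4-a')=(c_2-a)(c_3-a')$, but then interpret each tuple as a pair of segments from the diagonal points $(a,a)$ and $(a',a')$ to $C\times C$ having equal slope. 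That reading is accurate but forces a split into $a=a'$ (which you handle) and $a\neq a'$ (for which you only sketch a dyadic incidence argument with lines $au+a'v=w$; this is not carried out, and controlling the multiplicities of coinciding lines there is nontrivial). The missing idea is that the equation should not be read this way at all. Rearranging it as
\[
\frac{c_3-a'}{c_1-a}=\frac{c_4-a'}{c_2-a}
\]
shows that the three points $(a,a')$, $(c_1,c_3)$, $(c_2,c_4)$ in $(A\cup C)\times(A\cup C)$ are collinear; that is, pair the six coordinates as $(a,a')$, $(c_1,c_3)$, $(c_2,c_4)$ rather than as $(a,a),(a',a'),(c_1,c_2),(c_3,c_4)$. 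With this pairing, diagonal and off-diagonal contributions are bounded simultaneously by the number of collinear triples in $(A\cup C)^2$, and Lemma~\ref{collinear} gives $\sum_\lambda N(\lambda)^2\ll |A\cup C|^4\log|A|\ll|A|^4\log|A|$ in one step. Absent this observation, or a completed version of your incidence argument, the off-diagonal case remains a genuine gap.
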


\begin{proof} 
We have
\begin{align*}
\sum_{a \in A} E^{\times}(B,C-a)&=|\{(a,b,b',c,c') \in A \times B \times B \times C \times C : b(c-a)=b'(c'-a)\}|
\\& \leq |\{(a,b,b',c,c') \in A \times B \times B \times C \times C : b(c-a)=b'(c'-a) \neq 0 \}| 
\\& +|A|^3 +|A|^2|B| + |A||B|^2
\\& \leq \left|\left\{(a,b,b',c,c') \in A \times B \times B \times C \times C , : \frac{b}{b'}=\frac{c'-a}{c-a} \neq 0\right\}\right |
\\ &+|A|^3 +E^{\times}(B)^{1/2}|A|^2 + |A||B|^2.
\end{align*}
The remaining task is to bound the main term. Applying the Cauchy-Schwarz inequality yields
\begin{align} \label{eq1}
&\left|\left\{(a,b,b',c,c') \in A \times B \times B \times C \times C  : \frac{b}{b'}=\frac{c'-a}{c-a} \neq 0\right\}\right| \nonumber
\\& =\sum_{x \neq 0} r_{B/B}(x)n(x) \nonumber
\\& \leq \left(\sum_x r_{B/B}^2(x)\right)^{1/2} \left(\sum_{x\neq 0} n^2(x) \right)^{1/2} \nonumber
\\& = E^{\times}(B)^{1/2}  \left(\sum_{x\neq 0} n^2(x) \right)^{1/2},
\end{align}
where
$$n(x)=\left| \left \{ (a,c,c') \in A\times C \times C: x=\frac{c'-a}{c-a} \right\} \right |.$$
Note that
\begin{align*}
\sum_x n^2(x)&\leq \left | \left \{(a_1,a_2,c_1,c_2,c_1',c_2') :\frac{c_1'-a_1}{c_1-a_1}=\frac{c_2'-a_2}{c_2-a_2} \right \} \right|
\\&=\left | \left \{(a_1,a_2,c_1,c_2,c_1',c_2') :\frac{c_2-a_2}{c_1-a_1}=\frac{c_2'-a_2}{c_1'-a_1} \right \} \right|.
\end{align*}
The identity $\frac{c_2-a_2}{c_1-a_1}=\frac{c_2'-a_2}{c_1'-a_1}$ occurs only if the three points $(a_1,a_2),(c_1,c_2),(c_1',c_2') \in (A\cup C) \times (A \cup C)$ are collinear. By Lemma \ref{collinear}, there are $O(|A\cup C|^4 \log|A|)$ such collinear triples, and so
$$\sum_x n^2(x) \ll |A|^4\log|A|.$$
Combining this with \eqref{eq1}, we have
$$\sum_{a \in A} E^{\times}(B,C-a) \ll E^{\times}(B)^{1/2}|A|^2\log^{1/2}|A|+|A|^3+|A||B|^2.$$

\end{proof}

\begin{cor} \label{pinned} Let $A \subset \mathbb R$  be finite. Then there exists $a \in A$ such that
\begin{equation}
E^{\times}(A)|A(A+a)|^2 \gg \frac{|A|^6}{\log |A|}.
\label{pinned+}
\end{equation}
Similarly, there exists $b \in A$ such that
\begin{equation}
E^{\times}(A)|A(A-b)|^2 \gg \frac{|A|^6}{\log |A|}.
\label{pinned-}
\end{equation}
\end{cor}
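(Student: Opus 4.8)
The plan is to apply Lemma \ref{pinned1} with $B = C = A$, so that
$$\sum_{a \in A} E^{\times}(A, A-a) \ll E^{\times}(A)^{1/2} |A|^2 \log^{1/2} |A| + |A|^3 + |A|^3.$$
For the pinned product set $A(A+a)$, observe that $A(A+a)$ is, up to a harmless shift in notation, the set $A \cdot (A-(-a))$; more directly, I would note that $A(A+a)$ has the same cardinality as the dilated set and that the Cauchy-Schwarz bound \eqref{CSbound} gives $E^{\times}(A, A+a) \ge |A|^4 / |A(A+a)|$. The point is that $E^{\times}(A,A+a)$ counts solutions to $a_1(a_3+a)=a_2(a_4+a)$, and reindexing $-a \mapsto a$ shows $\sum_{a\in A} E^{\times}(A,A+a) = \sum_{a \in A} E^{\times}(A, A-a)$ up to replacing $A$ by $-A$ in the shift variable, which does not affect the bound since Lemma \ref{pinned1} only depends on the sizes $|A|\approx|C|$. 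So I get
$$\sum_{a \in A} E^{\times}(A, A+a) \ll E^{\times}(A)^{1/2} |A|^2 \log^{1/2}|A| + |A|^3.$$

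Next I would combine this with the lower bound. Using \eqref{CSbound} for each $a$,
$$\sum_{a \in A} \frac{|A|^4}{|A(A+a)|} \le \sum_{a \in A} E^{\times}(A,A+a) \ll E^{\times}(A)^{1/2}|A|^2 \log^{1/2}|A| + |A|^3.$$
By pigeonholing (or rather, by the fact that the minimum is at most the average is the wrong direction — instead I use that there exists $a$ with $|A(A+a)|$ maximal, equivalently $|A|^4/|A(A+a)|$ minimal, so that $|A| \cdot |A|^4/|A(A+a)| \le \sum_{a} |A|^4/|A(A+a)|$), there exists $a \in A$ with
$$\frac{|A|^5}{|A(A+a)|} \ll E^{\times}(A)^{1/2}|A|^2 \log^{1/2}|A| + |A|^3.$$
Now I need to dispose of the $|A|^3$ term. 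If $E^{\times}(A)^{1/2}|A|^2 \log^{1/2}|A|$ dominates, rearranging gives $E^{\times}(A)|A(A+a)|^2 \gg |A|^6 / \log|A|$, as desired. If instead $|A|^3$ dominates, then $|A(A+a)| \gg |A|^2$, which is the trivial maximum up to constants, and since $E^{\times}(A) \ge |A|^2$ always (the diagonal solutions), we get $E^{\times}(A)|A(A+a)|^2 \gg |A|^2 \cdot |A|^4 = |A|^6$, which is even stronger. Either way \eqref{pinned+} holds.

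The argument for \eqref{pinned-} is identical, applying Lemma \ref{pinned1} with $B = C = A$ but tracking $E^{\times}(A, A-a)$ directly rather than after a sign change — in fact this case is slightly more immediate since the sum $\sum_{a\in A} E^{\times}(A, A-a)$ is exactly what Lemma \ref{pinned1} bounds. I do not anticipate a serious obstacle here; the only point requiring a little care is the bookkeeping converting the shift $A+a$ into the form appearing in Lemma \ref{pinned1}, and confirming that the $E^{\times}(A)$-free error terms ($|A|^3$ and $|A||B|^2 = |A|^3$) are genuinely negligible in the regime of interest, which the case analysis above handles cleanly.
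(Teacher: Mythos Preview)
Your proposal is correct and follows essentially the same route as the paper: apply Lemma \ref{pinned1} with $B=C=A$, use \eqref{CSbound} termwise, and pigeonhole to extract a single $a$. The only cosmetic difference is that the paper absorbs the $|A|^3$ error immediately by observing $E^{\times}(A)\ge |A|^2$, so $E^{\times}(A)^{1/2}|A|^2\log^{1/2}|A|\ge |A|^3$, which replaces your two-case argument with a single line; your case analysis is nonetheless valid and reaches the same conclusion.
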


\begin{proof}
By Lemma \ref{pinned1}, we have
$$\sum_{a \in A} E^{\times}(A,A+a) \ll E^{\times}(A)^{1/2}|A|^2 \log^{1/2}|A| +  |A|^3 \ll E^{\times}(A)^{1/2}|A|^2 \log^{1/2}|A|.$$
Therefore, by the pigeonhole principle, there exists $a \in A$ such that 
$$E^{\times}(A,A+a) \ll E^{\times}(A)^{1/2}|A|\log^{1/2}|A|.$$ By the Cauchy-Schwarz inequality
$$\frac{|A|^4}{|A(A+a)|} \leq E^{\times}(A,A+a) \ll E^{\times}(A)^{1/2}|A|\log^{1/2}|A|.$$
A rearrangement of this inequality completes the proof. The proof of \eqref{pinned-} is essentially identical.
\end{proof}

Corollary \ref{pinned} provides a pinned version of the following result, which was the main lemma from \cite{MRNS}. 

\begin{lemma} \label{thm:oldmainlemma}
For any finite sets $A,B,C \in \mathbb R$,
$$E^{\times}(A)|A(B+C)|^2 \gg \frac{|A|^4|B||C|}{\log |A|}.$$
\end{lemma}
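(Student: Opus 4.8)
The plan is to mimic the proof of Lemma \ref{pinned1} and Corollary \ref{pinned}, but without the pinning: instead of summing over shifts $c-a$ with $a\in A$, I treat the generic two-set situation directly. I would start from the quantity $E^{\times}(A, B+C)$ and rearrange an equation $a_1(b_1+c_1)=a_2(b_2+c_2)$, with $a_i\in A$, $b_i\in B$, $c_i\in C$, into the multiplicative form $\frac{a_1}{a_2}=\frac{b_2+c_2}{b_1+c_1}$, after first peeling off the (negligibly small) contribution of the solutions where one side is zero. This expresses the count as $\sum_{x\neq 0} r_{A/A}(x)\, m(x)$, where $m(x)$ is the number of quadruples $(b_1,b_2,c_1,c_2)\in B^2\times C^2$ with $\frac{b_2+c_2}{b_1+c_1}=x$. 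By Cauchy--Schwarz this is at most $E^{\times}(A)^{1/2}\bigl(\sum_x m(x)^2\bigr)^{1/2}$.

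The next step is the key geometric input: $\sum_x m(x)^2$ counts sextuples $(b_1,b_2,c_1,c_2,c_1',c_2')$ — here with a mild abuse I mean $(b_1,b_1',b_2,b_2',c_1,c_2)$ — satisfying a cross-ratio-type identity of the shape $\frac{b_2+c_2}{b_1+c_1}=\frac{b_2'+c_2'}{b_1'+c_1'}$, which, exactly as in Lemma \ref{pinned1}, forces three points built from coordinates in $B\cup C$ (or $B$ and $C$ on the two axes) to be collinear. Lemma \ref{collinear} then bounds this by $O((|B|+|C|)^4\log(|B|+|C|))$ collinear triples. Combining, $E^{\times}(A,B+C)\ll E^{\times}(A)^{1/2}(|B|+|C|)^2\log^{1/2}(\cdot)$, plus the lower-order error terms of size like $|A|^2|B|$, $|A|^2|C|$, etc. Finally, the Cauchy--Schwarz lower bound \eqref{CSbound} gives $E^{\times}(A,B+C)\geq \frac{|A|^2|B+C|^2}{|A(B+C)|}$, and since $|B+C|\geq \max\{|B|,|C|\}$, rearranging produces $E^{\times}(A)|A(B+C)|^2\gg \frac{|A|^4|B|^2|C|^2}{(|B|+|C|)^4\log(\cdot)}$, which I would then massage into the stated $\gg \frac{|A|^4|B||C|}{\log|A|}$ — though I suspect the clean statement in the lemma actually comes from the special balanced case or from a slightly different bookkeeping of $n(x)$, so I would be careful here.

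The main obstacle I anticipate is precisely this last bookkeeping: getting the power of $|B|$ and $|C|$ to come out as $|B||C|$ rather than $\frac{|B|^2|C|^2}{(|B|+|C|)^4}$ or $\min\{|B|,|C|\}^2$. The cleanest route is probably to not fully symmetrize — e.g. bound one of the sums $\sum_x m(x)^2$ by counting collinear triples among points whose coordinates lie in $B$ along one axis and $C$ along the other, so that Lemma \ref{collinear} is applied to the set of size $\max\{|B|,|C|\}$ but the "trivial" degenerate solutions contribute $|B||C|\cdot(\text{stuff})$, and to absorb $|A|^3$-type terms using the fact that $E^{\times}(A)\geq |A|^2$ and $|A(B+C)|\geq\max\{|A|,|B+C|\}$. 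I would also need to check the error terms arising from the zero-side solutions and from the diagonal in the Cauchy--Schwarz step are genuinely dominated by the main term after the final rearrangement; this is routine but is where the logarithmic $\gtrsim$ versus $\gg$ distinction and the precise exponent would be pinned down.
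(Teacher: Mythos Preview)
This lemma is not proved in the present paper; it is quoted from \cite{MRNS} as ``the main lemma'' there, so there is no in-paper proof to compare against. Your plan is indeed the template used in \cite{MRNS} (and reprised here as Lemma~\ref{pinned1}/Corollary~\ref{pinned} for the pinned version), but as written it contains a concrete slip, and the issue you flag at the end is genuinely unresolved.

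The slip: with $m(x)=\lvert\{(b_1,b_2,c_1,c_2):(b_2+c_2)/(b_1+c_1)=x\}\rvert$ counting \emph{quadruples}, the sum $\sum_x m(x)^2$ counts \emph{octuples}, not sextuples, and the resulting identity does \emph{not} reduce to collinearity of three grid points; Lemma~\ref{collinear} is inapplicable. The faithful analogue of Lemma~\ref{pinned1} is to sum over only one shift variable: work with $\sum_{c\in C}E^{\times}(A,B+c)$, bound it below by $|C|\cdot|A|^2|B|^2/|A(B+C)|$ (since each $A(B+c)\subseteq A(B+C)$), and above via Cauchy--Schwarz by $E^{\times}(A)^{1/2}\bigl(\sum_{x} n(x)^2\bigr)^{1/2}$ with $n(x)=\lvert\{(b_1,b_2,c):(b_2+c)/(b_1+c)=x\}\rvert$. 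Now $\sum_x n(x)^2$ \emph{is} a sextuple count, and exactly as in Lemma~\ref{pinned1} the identity $(b_2+c)(b_1'+c')=(b_1+c)(b_2'+c')$ is collinearity of $(-c,-c')$, $(b_1,b_1')$, $(b_2,b_2')$ in $\bigl(B\cup(-C)\bigr)^2$, whence Lemma~\ref{collinear} gives $\sum_x n(x)^2\ll(|B|+|C|)^4\log(|B|+|C|)$.

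Your closing worry is then well founded: this route yields
\[
E^{\times}(A)\,|A(B+C)|^2 \;\gg\; \frac{|A|^4|B|^4|C|^2}{(|B|+|C|)^4\log(|B|+|C|)},
\]
and combining with the symmetric bound one only obtains $\gg |A|^4\min(|B|,|C|)^2/\log$, which matches the stated $|A|^4|B||C|/\log$ precisely when $|B|\approx|C|$. Every invocation of the lemma in this paper is in that balanced regime (Corollary~\ref{pinned} takes $B=C=A$; the derivation of \eqref{A(A+A+A+A)} takes $B=C=A+A$), so the discrepancy is harmless for the applications here. To recover the clean $|B||C|$ factor for arbitrary $B,C$ you would need a sharper mixed collinear-triple estimate---for instance, H\"older against the separate collinear-triple counts in $B\times B$ and $(-C)\times(-C)$ rather than the crude union $\bigl(B\cup(-C)\bigr)^2$---not merely bookkeeping. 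The appearance of $\log|A|$ rather than $\log(|B|+|C|)$ in the stated lemma is a separate cosmetic artefact of the same balanced-regime usage.
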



\section{A conditional lower bound on $|A(A+\alpha)|$} \label{sec:lemma2}

The second main new lemma of this paper is the following.


\begin{lemma} \label{Sh4} Let $A \subset \mathbb R$ and $\alpha \in \mathbb R \setminus \{ 0 \}$. Then
$$|A(A+\alpha)| \gtrsim \frac{E^{\times}(A)^2}{|A|^{\frac{58}{13}} d_*^{\frac{7}{13}}(A)}.$$
\end{lemma}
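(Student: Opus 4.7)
The plan is to apply the Cauchy--Schwarz bound \eqref{CSbound} to the pair $A$ and $A+\alpha$ (which have the same cardinality) to reduce the problem to an upper bound for the multiplicative energy $E^\times(A, A+\alpha)$:
\[
|A(A+\alpha)| \;\geq\; \frac{|A|^4}{E^\times(A, A+\alpha)}.
\]
It therefore suffices to show that
\[
E^\times(A, A+\alpha) \;\lesssim\; \frac{|A|^{110/13}\, d_*(A)^{7/13}}{E^\times(A)^2}.
\]
To that end, I would expand $E^\times(A, A+\alpha)$ by grouping solutions of $a_1(c_2+\alpha) = a_2(c_1+\alpha)$ (with $a_i, c_i \in A$) according to the common ratio $q = a_1/a_2 = (c_2+\alpha)/(c_1+\alpha)$. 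Discarding $O(|A|^2)$ degenerate contributions, this gives
\[
E^\times(A, A+\alpha) \;=\; \sum_{q\neq 0} |A_q|\cdot |(A+\alpha)_q|,
\]
and the key identity is
\[
|(A+\alpha)_q| \;=\; |\{a \in A : qa + (q-1)\alpha \in A\}| \;=\; r_{A - qA}\bigl((q-1)\alpha\bigr),
\]
which rewrites the second factor as a difference-set representation count and thereby bridges the multiplicative shift $\alpha$ and the additive combinatorics of $A$.

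Next I would dyadically decompose in $|A_q|$, setting $P_\tau := \{q : \tau \leq |A_q| < 2\tau\}$ for dyadic $\tau \in [1,|A|]$. Lemma \ref{thm:KSlem}, applied to $A$ (and interpreted for quotients via a logarithmic change of variables, or via the projective nature of the Szemer\'edi--Trotter argument underlying the lemma), yields $|P_\tau| \lesssim |A|^3 d_*(A)/\tau^3$. The contribution of the band $P_\tau$ to $E^\times(A, A+\alpha)$ is at most
\[
2\tau \sum_{q \in P_\tau} |(A+\alpha)_q| \;\leq\; 2\tau\,|P_\tau|^{1/2}\Bigl(\sum_{q \in P_\tau} |(A+\alpha)_q|^2\Bigr)^{1/2}
\]
by Cauchy--Schwarz. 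To control the inner sum I would invoke Theorem \ref{hardthm}: the representation-count reinterpretation above reduces $\sum_q |(A+\alpha)_q|^2$ to a count involving the additive structure of the dilates $qA$, which, in the small-product-set regime that is the only interesting one here, is controlled by the sharp Shkredov energy bound carrying the characteristic $32/13$ exponent.

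Finally, summing the dyadic contributions over $\tau$ and balancing the Lemma \ref{thm:KSlem} and Theorem \ref{hardthm} factors via a H\"older-type calibration produces the exponents $58/13$ and $7/13$. The principal obstacle will be the second ingredient: bounding $\sum_q |(A+\alpha)_q|^2 = E^\times(A+\alpha)$ --- a quantity about the multiplicative structure of a set over which one has no direct control --- in terms of $|A|$ and $d_*(A)$ alone, with no loss in $\alpha$. Achieving this requires combining Theorem \ref{hardthm} with a Pl\"unnecke-type transfer to pass from the dilates $qA$ back to $A$, and it is the delicacy of this step that produces the non-generic exponent $58/13 = 4 + 6/13$ in the final bound.
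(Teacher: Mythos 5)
Your opening Cauchy--Schwarz reduction, the decomposition $E^\times(A,A+\alpha)=\sum_q|A_q|\,|(A+\alpha)_q|$, and the identity $|(A+\alpha)_q|=r_{A-qA}\bigl((q-1)\alpha\bigr)$ are all correct. However, the plan then hinges on bounding $\sum_{q\in P_\tau}|(A+\alpha)_q|^2$, i.e.\ (a piece of) the \emph{multiplicative} energy $E^\times(A+\alpha)$, in terms of $|A|$ and $d_*(A)$. This is where the argument breaks down. Theorem \ref{hardthm} controls the \emph{additive} energy $E^+(A)$ via the multiplicative parameter $d_*(A)$; it says nothing about the multiplicative energy of the additively shifted set $A+\alpha$. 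Controlling $E^\times(A+\alpha)$ when $A$ has small $d_*$ (equivalently, small product set) is itself a sum--product type problem, and the claimed ``Pl\"unnecke-type transfer'' that would pass from dilates $qA$ back to $A$ without loss does not exist in the generality you need. You have correctly identified this as ``the principal obstacle,'' but the obstacle is fatal to the route as described rather than a technical delicacy to be managed: the step is precisely what the paper's argument is designed to sidestep. (A secondary issue: Lemma \ref{thm:KSlem} is stated for level sets of $r_{A-B}$, and it is not immediate that it transfers to level sets of $r_{A/A}$ with the same parameter $d_*(A)$, since $d_*$ is already a multiplicative quantity and a logarithmic change of variables would replace it by $d_*(\log A)$, an unrelated quantity.)

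The paper's proof takes a different double-counting route that dissolves the obstacle. It first invokes Lemma \ref{l:sigma&E} to choose a dilate $B=zA$ with $\sum_{b\in B}|B_b|\gg E^\times(A)/(|A|\log|A|)$, and then counts solutions of $b_1(b_1'+\alpha z)=b_2(b_2'+\alpha z)$ subject to the crucial restriction $b_i'\in B_{b_i}$. Under that restriction $b_i'=b_i''/b_i$ with $b_i''\in B$, so $b_i(b_i'+\alpha z)=b_i''+\alpha z b_i$, and the count $S$ becomes a purely \emph{additive} energy: $S\le E^+(B,\alpha z B)\le E^+(A)$. Theorem \ref{hardthm} then applies \emph{directly} to $E^+(A)$, with no transfer needed and no appearance of $E^\times(A+\alpha)$. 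The lower bound on the $\ell^1$-mass of $n(t)$ (which comes from Lemma \ref{l:sigma&E}) is where $E^\times(A)$ enters, and the Cauchy--Schwarz step against the support $B(B+\alpha z)=A(A+\alpha)$ closes the argument. In short, the trick you are missing is the restriction to $b'\in B_b$, which is exactly what converts the multiplicative-energy-of-a-shift problem into an additive-energy-of-$A$ problem.
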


In particular, this result implies the following statement:
$$|AA| \leq M|A| \Rightarrow |A(A+\alpha)| \gg_M |A|^{\frac{3}{2}+c}.$$
Indeed, since by the Cauchy-Schwarz inequality $E^{\times}(A) \geq |A|^3/M$ and since $d_*(A) \leq d(A) \leq M^2$, we have
$$|A(A+\alpha)| \gtrsim \frac{|A|^{\frac{20}{13}}}{M^{\frac{40}{13}}}.$$

In the proof of Lemma \ref{Sh4} we will need the following simple lemma. See \cite[Lemma 4]{Sh4}.

\begin{lemma}
    Let $G$ be an abelian group and $A\subset G$ be a finite set.
    Then there is $z$ such that
    $$
        \sum_{x\in zA} |(zA) \cap x (zA)| \gg \frac{E^{\times} (A)}{|A| \log |A|} \,.
    $$
\label{l:sigma&E}
\end{lemma}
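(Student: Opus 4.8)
The plan is to establish the (formally stronger) inequality $\sum_{x \in zA} |(zA)\cap x(zA)| \geq E^{\times}(A)/|A|$ for a suitable $z$, which certainly implies the stated bound; indeed $z$ can be taken of the form $a_0^{-1}$ with $a_0 \in A$. The whole argument is pure double counting, using nothing beyond the pigeonhole principle — in particular it needs neither the Szemer\'{e}di--Trotter theorem nor any energy inequality, and the logarithmic factor in the statement is slack that is not actually needed.

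First I would rewrite the left-hand side as a cubic counting function. Fix $z$ and put $B = zA$. Every $x \in B$ is of the form $x = zb$ with $b \in A$, and an element $w \in B$ lies in $xB$ precisely when $x^{-1}w \in B$; writing $w = za$ with $a \in A$, and using that $G$ is abelian, this says $x^{-1}w = a b^{-1} \in zA$, i.e. $a = zba'$ for some $a' \in A$. Summing over $x = zb \in B$ therefore gives
\[
\sum_{x\in zA}|(zA)\cap x(zA)| = \bigl|\{(a,a',b)\in A^3 : a = zba'\}\bigr|.
\]
In particular, choosing $z = a_0^{-1}$ for some $a_0 \in A$, the right-hand side becomes $\bigl|\{(a,a',b)\in A^3 : a_0 a = a'b\}\bigr|$.

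Next I would sum this over all $a_0 \in A$:
\[
\sum_{a_0\in A}\bigl|\{(a,a',b)\in A^3 : a_0 a = a'b\}\bigr| = \bigl|\{(a_0,a,a',b)\in A^4 : a_0 a = a'b\}\bigr| = E^{\times}(A),
\]
where the last equality holds because the relation $a_0 a = a'b$ is equivalent to $a_0/a' = b/a$, so the quadruples being counted are exactly those counted by the multiplicative energy. A single application of the pigeonhole principle now produces $a_0 \in A$ for which, with $z := a_0^{-1}$,
\[
\sum_{x\in zA}|(zA)\cap x(zA)| = \bigl|\{(a,a',b)\in A^3 : a_0 a = a'b\}\bigr| \geq \frac{E^{\times}(A)}{|A|} \geq \frac{E^{\times}(A)}{|A|\log|A|}.
\]

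There is no genuine obstacle here; the only points requiring care are the identification of the left-hand side with the correct counting quantity in the first step, and the elementary bijection underlying $\sum_{a_0\in A}|\{(a,a',b): a_0 a = a'b\}| = E^{\times}(A)$. Equivalently, one can phrase the same computation through representation functions, writing $\sum_{x\in zA}|(zA)\cap x(zA)| = \sum_{a\in A} r_{AA}(az^{-1})$ and using $\sum_{a,a'\in A} r_{AA}(aa') = E^{\times}(A)$, but this is merely the bookkeeping above organised differently.
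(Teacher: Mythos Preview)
Your argument is correct, and in fact proves the stronger inequality without the $\log|A|$ loss. The paper itself does not prove this lemma: it is quoted as \cite[Lemma~4]{Sh4} and used as a black box. So there is no ``paper's own proof'' to compare against beyond that citation. Your double-counting identity
\[
\sum_{x\in zA}\bigl|(zA)\cap x(zA)\bigr|=\bigl|\{(a,a',b)\in A^3: a=zba'\}\bigr|
\]
is valid (the key point being that in a group $a'$ is uniquely determined by $a,b,z$, so counting elements of the intersection is the same as counting pairs), and summing over $z=a_0^{-1}$ with $a_0\in A$ recovers $E^{\times}(A)$ exactly, after which pigeonhole gives the claim. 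The alternative bookkeeping via $\sum_{a\in A}r_{AA}(az^{-1})$ that you mention is equally valid. The $\log|A|$ in the stated lemma is indeed slack for this particular inequality; presumably the formulation in \cite{Sh4} arises from a dyadic-pigeonholing presentation, but your direct averaging avoids it.
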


We will also need the following result of the third author \cite{Sh3}, which tells us that a set $A$ such that the parameter $d_*(A)$ is small has small additive energy. One can obtain similar but weaker results using the Szemer\'{e}di-Trotter Theorem in a more elementary way, see for example Corollary 8 in \cite{RRNS}, but the important thing for our application of this result is that the exponent $32/13$ is smaller than $5/2$.

\begin{theorem}[\cite{Sh2}, Theorem 5.4] \label{hardthm} Let $A\subset \mathbb R$ be a finite set.
Then
$$E^+(A) \ll d_* (A)^{7/13} |A|^{32/13} \log^{71/65}|A|.$$
\end{theorem}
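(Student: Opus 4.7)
My plan is to estimate $E^+(A)$ by a dyadic decomposition of the representation function $r_{A-A}$, refined by a higher-energy argument that exploits more than just Lemma \ref{thm:KSlem}. Write
$$E^+(A) = \sum_x r_{A-A}(x)^2$$
and, after the usual dyadic pigeonholing, reduce to a single popular level set
$$P = P_\tau := \{x : \tau \leq r_{A-A}(x) < 2\tau\}, \qquad E^+(A) \lesssim \tau^2 |P|.$$
One input is the direct bound from Lemma \ref{thm:KSlem}, applied with $B = A$, giving $|P| \ll |A|^3 d_*(A) / \tau^3$, and hence $E^+(A) \ll |A|^3 d_*(A)/\tau$. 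This already suffices in the range where $\tau$ is large (roughly $\tau \gtrsim |A|^{7/13} d_*(A)^{6/13}$), so the work is all in the regime of small $\tau$.

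For small $\tau$ I would bring in a second inequality via the third moment $E_3^+(A) = \sum_x r_{A-A}(x)^3$. A Cauchy--Schwarz step gives $E^+(A)^2 \leq |A|^2\, E_3^+(A)$, so it suffices to control $E_3^+(A)$. Note that $r_{A-A}(x) = |A_x|$ with $A_x = A \cap (A+x)$, which lets one rewrite $E_3^+(A)$ as a count of ordered tuples in $A$ and reinterpret it as a weighted incidence quantity; applying Lemma \ref{thm:KSlem} to a derived configuration (rather than to $A-A$ directly) should give a bound strictly better than the naive $E_3^+(A) \lesssim |A|^3 d_*(A)$ produced by a single level-set calculation.

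The two bounds $E^+(A) \ll |A|^3 d_*(A)/\tau$ and $E^+(A)^2 \ll |A|^2\, E_3^+(A)$ are then combined through a weighted geometric mean, with $\tau$ chosen to balance the two contributions. The exponents $32/13$ and $7/13$ arise as the solution of this optimization: writing the bound as $E^+(A) \ll |A|^{a} d_*(A)^{b}$ with constraints coming from the two inputs (and $a+b = 3$ in the worst case) forces $a = 32/13$, $b = 7/13$.

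The main obstacle is the sharpened higher-energy estimate in the second paragraph: a direct dyadic plus Lemma \ref{thm:KSlem} argument only yields the $5/2$-barrier bound $E^+(A) \ll |A|^{5/2} d_*(A)^{1/2}$ (up to logs), and pushing past $5/2$ to $32/13$ requires extracting further structure from $P_\tau$. I expect this to need either an eigenvalue/operator-norm argument of Shkredov applied to the adjacency matrix of the bipartite graph $\{(a,x) : a \in A_x,\ x \in P_\tau\}$, or a Pl\"{u}nnecke--Ruzsa-type inequality controlling the additive structure of $P_\tau$ in terms of $|A|$ and $d_*(A)$. The logarithmic power $71/65$ in the statement is a telltale sign of several nested dyadic pigeonholings in such an argument.
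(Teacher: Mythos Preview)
The paper does not prove Theorem~\ref{hardthm}; it is quoted from the third author's earlier work \cite{Sh3} and used as a black box. The only commentary the authors add is that the argument in \cite{Sh3} was originally stated for the older quantity $d(A)$, and since the sole Szemer\'{e}di--Trotter input is the level-set bound now recorded as Lemma~\ref{thm:KSlem}, the same proof goes through verbatim with $d_*(A)$ in place of $d(A)$.

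Your outline correctly locates the shape of the argument in \cite{Sh3}: one does pass through higher energies and an operator/eigenvalue bound, and the direct approach via Lemma~\ref{thm:KSlem} alone stalls at exponent $5/2$. But what you have written is a diagnosis rather than a proof. You name the obstacle---extracting extra structure from $P_\tau$ to beat $5/2$---without resolving it, and your optimization heuristic does not actually determine the exponents: you never specify what the improved bound on $E_3^+(A)$ (or on the derived incidence quantity) is, so there is nothing concrete to balance against $|A|^3 d_*(A)/\tau$, and the claim that ``$a+b=3$ forces $a=32/13$, $b=7/13$'' is empty as stated. In \cite{Sh3} those exponents emerge from a specific chain of inequalities linking $E^+(A)$, $E_3^+(A)$, and an auxiliary energy $E^+(A,D)$ for a popular difference set $D\subseteq A-A$, together with a spectral bound for the associated operator. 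Your second paragraph gestures at the right object (the bipartite graph on $A\times P_\tau$ is indeed what one analyzes), but the substantive work---proving the eigenvalue inequality and threading the resulting bounds together---is exactly what is missing here.
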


The result in \cite{Sh3} did not use the quantity $d_* (A)$ because it was introduced later in \cite{KS2} but one can check that the arguments of \cite{Sh3} work for any Szemer\'{e}di-Trotter set. The only fact we need is an upper bound for size of a set $\{ x ~:~r_{A-B} (x) \ge \tau \}$, so Lemma \ref{thm:KSlem} is enough for us.


\begin{proof}[Proof of Lemma \ref{Sh4}] Without loss of generality, we may assume that $0 \notin A$.
Applying Lemma \ref{l:sigma&E} and writing $B=zA$, 
we have 
\begin{equation}
\sum_{\lambda \in B}|B \cap \lambda^{-1} B| \gg \frac{E^{\times}(A)}{|A| \log |A|}.
\label{sumbound}
\end{equation}

Next we double count the number of solution to the equation
\begin{equation}
b_1(b_1'+\alpha z)=b_2(b_2'+\alpha z)
\label{energy}
\end{equation}
such that $b_1,b_2 \in B$, $b_1' \in B_{b_1}$ and $b_2' \in B_{b_2}$.

Let $S$ denote the number of solutions to \eqref{energy}. Suppose that we have such a solution. Then $b_1'=b_1''/b_1$ and $b_2'=b_2''/b_2$ for some $b_1'',b_2'' \in B$. Therefore
$$\alpha z b_1 +b_1''=\alpha z b_2+b_2'',$$
and it follows that $S \leq E^+(B,\alpha z B)$. An application of the Cauchy-Schwarz inequality then gives $E^+(B,\alpha z B) \leq E^+(B)^{1/2}E^+(\alpha z B)^{1/2}=E^+(B)=E^+(A)$. So, by Theorem \ref{hardthm}
\begin{equation}
\label{upperbound}
S \leq E^+(A) \lesssim d_* (A)^{7/13} |A|^{32/13}.
\end{equation}

On the other hand denote
$$n(t)=|\{(b,b') \in B \times B_b : b(b'+\alpha z)=t\}|$$
and  note that $S=\sum_{t} n^2(t)$. Also, $n(t)>0$ only if $t \in B(B+\alpha z)$. Then, by \eqref{sumbound}, the Cauchy-Schwarz inequality and \eqref{upperbound}
\begin{align}
\label{final'}
\frac{E^{\times}(A)^2}{|A|^2 \log^2 |A|}& \ll\left( \sum_{\lambda \in B}|B \cap \lambda^{-1} B| \right)^2
\\& = \left(\sum_t n(t)\right)^2
\\& \leq |B(B+\alpha z)| \sum_t n^2(t)
\\ & \lesssim   |B(B+\alpha z)|  d_* (A)^{7/13} |A|^{32/13}.
\label{final'}
\end{align}

Finally, note that $|B(B+\alpha z)|=|A(A+\alpha)|$. We conclude that
$$|A(A+\alpha)| \gtrsim \frac{E^{\times}(A)^2}{|A|^{\frac{58}{13}}d_*(A)^{\frac{7}{13}}}.$$
\end{proof}

Actually, one can see that we have proved the inequality $|A|^2 |A(A+\alpha)| E^{+} (A) \gtrsim (E^{\times} (A))^2$ for any finite subset of reals.

\section{Proof of Theorem \ref{thm:main1}} \label{sec:thm1}

Before proving Theorem \ref{thm:main1}, we need one more lemma.

\begin{lemma} \label{thm:bigenergy} Let $A \subset \mathbb R$ and suppose that $E^{\times}(A) \geq \frac{|A|^3}{K}$. Then for any $\alpha \in \mathbb R \setminus \{0\}$
$$|A(A+\alpha)| \gtrsim \frac{|A|^{20/13}}{K^{40/13}}.$$
\end{lemma}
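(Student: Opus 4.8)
The plan is to combine Lemma~\ref{Sh4} with the hypothesis $E^{\times}(A) \geq |A|^3/K$ and the trivial bound $d_*(A) \leq d(A) \leq K^2$ that follows from a small product set. First I would invoke Lemma~\ref{Sh4}, which gives
$$|A(A+\alpha)| \gtrsim \frac{E^{\times}(A)^2}{|A|^{58/13} d_*^{7/13}(A)}$$
for any nonzero $\alpha$. Next I would substitute the lower bound $E^{\times}(A)^2 \geq |A|^6/K^2$ into the numerator. The denominator requires an upper bound on $d_*(A)$: the hypothesis $E^{\times}(A) \geq |A|^3/K$ is (by \eqref{CSbound}) equivalent to nothing about $|AA|$ directly, so instead I would note that $d_*(A) \leq |A|^2/E^{\times}(A) \cdot (\text{something})$ — more carefully, one should use that $d_*(A)$ is always at most $|A|$ trivially, or better, relate it to the multiplicative energy.

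Actually the cleanest route is to observe that taking $Q = R = A$ and $t = r$ for a suitable popularity threshold $r$ in the definition of $d_*(A)$ yields a bound in terms of $E^{\times}(A)$. Since $\sum_x r_{A/A}(x) = |A|^2$ and $\sum_x r_{A/A}^2(x) = E^{\times}(A)$, a dyadic-pigeonholing argument produces a value $t$ and a set of $x$ with $r_{A/A}(x) \geq t$ of size comparable to $E^{\times}(A)/t^2$; but the definition of $d_*$ instead requires $|A \cap aR^{-1}| \geq t$ for \emph{every} $a \in A$. To handle this I would pass to the popular part of $A$ (the $a$ for which $|A_a|$ is large) — but $a$ here ranges over the \emph{base} set in $d_*(A)$, so one must be slightly careful. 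The key estimate I expect to use is simply $d_*(A) \leq d(A) = \min_C |AC|^2/(|A||C|) \leq |AA|^2/|A|^2$, combined with the Cauchy--Schwarz bound $|AA| \geq |A|^4/E^{\times}(A)$... no, that goes the wrong way. The correct input must be: small multiplicative energy is \emph{not} assumed; rather, large multiplicative energy means $A$ clusters multiplicatively, and by Balog--Szemer\'edi--Gowers (or the refinement from \cite{KS}) there is a large subset $A'$ with $|A'A'| \ll K^{O(1)}|A'|$, whence $d_*(A) \leq d_*(A') \cdot \text{(loss)} \ll K^{O(1)}$.

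So the honest plan: apply the Balog--Szemer\'edi--Gowers theorem (or the Konyagin--Shkredov variant) to the hypothesis $E^{\times}(A) \geq |A|^3/K$ to extract $A' \subseteq A$ with $|A'| \gg |A|/K^{O(1)}$ and $|A'A'| \ll K^{O(1)}|A'|$; then $d_*(A') \leq d(A') \ll K^{O(1)}$; then apply Lemma~\ref{Sh4} to $A'$ with the same $\alpha$, using $E^{\times}(A') \gg |A'|^3/K^{O(1)} \gg |A|^3/K^{O(1)}$, to get
$$|A(A+\alpha)| \geq |A'(A'+\alpha)| \gtrsim \frac{E^{\times}(A')^2}{|A'|^{58/13} d_*^{7/13}(A')} \gtrsim \frac{|A|^{20/13}}{K^{O(1)}}.$$
Then I would bookkeep the exponents of $K$ carefully to arrive at $K^{40/13}$; this constant-chasing is where the stated exponent $40/13$ must be matched, and getting it exactly right (rather than some larger power) is the main technical obstacle — it likely requires using the sharper extraction tools from \cite{KS} rather than the off-the-shelf Balog--Szemer\'edi--Gowers theorem, exactly as the introduction hints. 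I expect the dominant difficulty to be controlling the $K$-dependence through the BSG-type step so that it combines with the $7/13$ and $58/13$ exponents from Lemma~\ref{Sh4} to give precisely $40/13$.
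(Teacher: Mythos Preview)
Your overall plan---extract a structured subset $A'\subseteq A$ and then apply Lemma~\ref{Sh4} to $A'$---is exactly the paper's approach, and you are right that off-the-shelf Balog--Szemer\'edi--Gowers will not give the exponent $40/13$. But the specific route you sketch, namely extracting $A'$ with $|A'A'|\ll K^{O(1)}|A'|$ and then bounding $d_*(A')\leq d(A')\leq |A'A'|^2/|A'|^2$, does not reach $40/13$ even with a hypothetical ``perfect'' BSG ($|A'|\gg |A|/K$, $|A'A'|\ll K|A'|$): one obtains $|A'|^{20/13}/K^{40/13}$ and then loses a further $K^{20/13}$ when passing from $|A'|$ back to $|A|$.

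The paper never bounds $|A'A'|$ at all. Its extraction is: dyadically pigeonhole to find $P\subseteq A/A$ and $\Delta\geq |A|/(2K)$ with $r_{A/A}(x)\approx\Delta$ on $P$ and $\Delta^2|P|\gtrsim E^\times(A)$; then let $A'$ be the set of $a\in A$ with $|P\cap aA^{-1}|\geq \Delta|P|/(4|A|)$. A short computation gives $E^\times(A')\gtrsim E^\times(A)\geq |A|^3/K$ (no loss in $|A|$), and---this is the key point you are missing---the data produced by the extraction serve directly as witnesses in the definition of $d_*$: taking $Q=P$, $R=A$, $t=\Delta|P|/(4|A|)$ yields
\[
d_*(A') \leq \frac{|P|^2|A|^2}{|A'|t^3} \ll \frac{|A|^5}{|A'|\,|P|\Delta^3}\lesssim \frac{|A|^5}{|A'|E^\times(A)\Delta}\ll \frac{K^2|A|}{|A'|}.
\]
Plugging $E^\times(A')\gtrsim |A|^3/K$ and $d_*(A')\lesssim K^2|A|/|A'|$ into Lemma~\ref{Sh4} and using only $|A'|\leq |A|$ then gives exactly $|A|^{20/13}/K^{40/13}$. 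So the technical obstacle you flagged is resolved not by sharper BSG constants but by bounding $d_*(A')$ \emph{directly from the popular-ratio set $P$}, bypassing $d(A')$ and any product-set estimate entirely.
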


\begin{proof}
We claim that for any $A \subset \mathbb R$ such that $E^{\times}(A) \geq \frac{|A|^3}{K}$ there is a subset $A'\subseteq A$ such that
\begin{align}
\label{eq:energyLower} E^\times(A')&\gtrsim E^\times(A)\geq \frac{|A|^3}K,\\
\label{eq:dstarone}d_*(A')&\lesssim \frac{K^2|A|}{|A'|}.
\end{align}
Given such a subset $A'$, we may apply Lemma~\ref{Sh4} to find that
\begin{align*}
|A(A+\alpha)|\geq |A'(A'+\alpha)| 
&\gtrsim \frac{E^{\times}(A')^2}{|A'|^{58/13} d_*^{7/13}(A')}\\
&\gtrsim \frac{|A|^6}{K^2|A'|^{58/13}}\cdot\frac{|A'|^{7/13}}{|A|^{7/13}K^{14/13}}\\
&= \frac{|A|^{71/13}}{K^{40/13}|A'|^{51/13}}
\geq\frac{|A|^{20/13}}{K^{40/13}}.
\end{align*}

It remains to prove \eqref{eq:energyLower} and \eqref{eq:dstarone}.
By the popularity principle and dyadic pigeonholing there is a subset $P\subseteq A/A$ and a number $\Delta\geq |A|/2K$ such that for all $x$ in $P$
\[
\Delta\leq |A\cap xA| < 2\Delta,
\]
and
\[
\sum_{x\in P}|A\cap xA|^2 \gtrsim E^\times(A).
\]

Now we perform an additional refinement step.
Let $A'\subseteq A$ denote the set of $x$ such that
\[
|P\cap xA^{-1}| \geq \frac{\Delta |P|}{4|A|}.
\]
Since
\[
\sum_{x\in A}|P\cap xA^{-1}|=\sum_{x\in P}|A\cap xA|\geq \Delta|P|,
\]
by the popularity principle we have
\[
\sum_{x\in A'}|P\cap xA^{-1}| \geq \frac{3\Delta|P|}4.
\]
If $x\not\in A'$, then
\[
|P\cap x(A')^{-1}|\leq |P\cap xA^{-1}| < \frac{\Delta|P|}{4|A|}.
\]
Thus
\[
\frac{3\Delta|P|}4\leq \sum_{x\in A'}|P\cap xA^{-1}|= \sum_{x\in A}|P\cap x(A')^{-1}| \leq \frac{\Delta|P|}4 + \sum_{x\in A'}|P\cap x(A')^{-1}|,
\]
which yields
\[
\frac{\Delta|P|}2 \leq \sum_{x\in A'}|P\cap x(A')^{-1}| = \sum_{x\in P}|A'\cap xA'|.
\]
By Cauchy-Schwarz, we have
\[
E^\times(A')\gg \Delta^2|P| \gtrsim E^\times(A).
\]

Setting $Q=P$, $R=A$, and $t=(\D|P|)/(4|A|)$ in the definition of the quantity $d_{*} (A')$, we obtain
\[
d_*(A') \ll \frac{|P|^2|A|^2}{\left(\frac{\D |P|}{4|A|} \right )^3|A'|}
\ll \frac{|A|^5}{|A'| |P| \D^3} 
\lesssim \frac{|A|^5}{|A'|E^\times(A)\D}\ll \frac{K^2|A|}{|A'|},
\]
where the last inequality follows from the lower bounds for $E^\times(A)$ and $\Delta$.
\end{proof}

\begin{proof}[Proof of Theorem \ref{thm:main1}]
Write $E^{\times}(A)=\frac{|A|^3}{K}$.
By Corollary \ref{pinned} there is some $a \in A$ such that 
\begin{equation}\label{f:first_bound}
    |A(A+a)|\gg K^{1/2} |A|^{3/2} \log^{-1/2} |A| \gtrsim K^{1/2} |A|^{3/2} \,.
\end{equation}
On the other hand, for any $a \in A \setminus \{0\}$, Lemma \ref{thm:bigenergy} implies that
\begin{equation}
|A(A+a)| \gtrsim \frac{|A|^{20/13}}{K^{40/13}}.
\label{f:second_bound}
\end{equation}
Optimizing over (\ref{f:first_bound}) and (\ref{f:second_bound}), we obtain
$$
    |A(A+a)| \gtrsim |A|^{3/2 + 1/186} 
$$
as required.  

\end{proof}

In fact, by taking more care with the pigeonholing argument in the proof of Corollary \ref{pinned} it follows that the bound $|A(A+a)| \gtrsim |A|^{3/2 + 1/186} $ holds for at least half of the elements $a \in A$.

\section{Three variable expanders} \label{sec:thm2and3}

The proofs of Theorems \ref{thm:main2} and \ref{thm:main3} follow a similar argument to that of Theorem \ref{thm:main1}. We can use Corollary \ref{pinned} to get an exponent better than $3/2$ in the case when $E^{\times}(A)$ is not too large. However, in the case when $E^{\times}(A)$ is large, we need analogues of Lemma \ref{thm:bigenergy} that are quantitatively better for the purposes of these problems. These bounds are given by the following lemma.

 \begin{lemma} \label{thm:d*} Let $A \subset \mathbb R$ and suppose that $E^{\times}(A) \geq \frac{|A|^3}{K}$. Then
 \begin{equation}
 |A-A| \gtrsim \frac{|A|^{8/5}}{K^{6/5}}
 \label{A-A}
 \end{equation}
 and
  \begin{equation}
|A+A| \gtrsim \frac{|A|^{58/37}}{K^{42/37}}.
 \label{A+A}
 \end{equation}
 \end{lemma}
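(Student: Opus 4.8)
The plan is to follow the blueprint of Lemma~\ref{thm:bigenergy}: reduce $A$ to a subset $A'$ on which a Szemer\'{e}di--Trotter-type parameter is under control, prove the required bounds for $A'$, and transfer them back to $A$ using only the trivial inclusions $A'+A'\subseteq A+A$ and $A'-A'\subseteq A-A$. The opening move is exactly the pigeonholing carried out inside the proof of Lemma~\ref{thm:bigenergy}: applying the popularity principle and dyadic pigeonholing to the level sets of $x\mapsto|A\cap xA|$, followed by the refinement that makes $P\cdot A^{-1}$ popular over $A'$, one obtains $A'\subseteq A$ with
$$E^{\times}(A')\gtrsim E^{\times}(A)\geq\frac{|A|^3}{K},\qquad d_*(A')\lesssim\frac{K^2|A|}{|A'|},$$
and, since $E^{\times}(A')\leq|A'|^3$, also $|A'|\gtrsim|A|K^{-1/3}$, so that $|A'|$ ranges only over the window $[\,|A|K^{-1/3},|A|\,]$.

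For this subset we then have a Szemer\'{e}di--Trotter set with an explicit parameter, so three kinds of lower bound for its sum and difference sets are at our disposal: Solymosi's inequality $|A'+A'|^2\gg E^{\times}(A')/\log|A'|$ (Theorem~\ref{thm:soly}), available for the sum set only; the Cauchy--Schwarz bound $|A'\pm A'|\geq|A'|^4/E^{+}(A')$ combined with $E^{+}(A')\lesssim d_*(A')^{7/13}|A'|^{32/13}$ from Theorem~\ref{hardthm}; and a ``reciprocal-energy'' bound $|A'\pm A'|\gtrsim E^{\times}(A')^2/(|A'|^2E^{+}(A'))$ obtained by adapting Lemma~\ref{Sh4}. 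For the last of these one takes the dilate $B=zA'$ furnished by Lemma~\ref{l:sigma&E} and replaces the map $(b,b')\mapsto b(b'+\alpha z)$ used there by $(b,b')\mapsto b(b'\pm1)=bb'\pm b$; since $b'\in B_b$ forces $bb'\in B$, the associated function $n(t)$ is supported on $B\pm B=z(A'\pm A')$, its first moment is $\sum_t n(t)=\sum_{b\in B}|B_b|\gg E^{\times}(A')/(|A'|\log|A'|)$, and $\sum_t n^2(t)\leq E^{+}(B)=E^{+}(A')$, so Cauchy--Schwarz closes the loop. Substituting $d_*(A')\lesssim K^2|A|/|A'|$ and $E^{\times}(A')\gtrsim|A|^3/K$ and interpolating these three families of estimates against one another produces lower bounds for $|A'\pm A'|$ in terms of $|A|$, $|A'|$ and $K$, after which optimising over the admissible range of $|A'|$ should yield \eqref{A-A} and \eqref{A+A}.

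The main obstacle I anticipate is the bookkeeping in the middle step: the most obvious ways of combining the three families above fall a little short of the target (they tend to produce $|A|$-exponents near $20/13$ rather than $8/5$ or $58/37$), so one has to be careful about which pair of inequalities to balance, and at which value of $|A'|$ in the window $[\,|A|K^{-1/3},|A|\,]$. It is precisely the interplay of the exponents $\tfrac{7}{13},\tfrac{32}{13}$ from Theorem~\ref{hardthm} with the subset losses and with the later balancing against Corollary~\ref{pinned} (in the proofs of Theorems~\ref{thm:main2} and~\ref{thm:main3}) that should single out $8/5,6/5,58/37,42/37$; squeezing these out cleanly may well require a sharper input than $E^{+}(A')\geq|A'|^4/|A'\pm A'|$, for instance controlling a higher additive energy $E_k^{+}(A')$ directly through Lemma~\ref{thm:KSlem} and an optimised application of H\"older's inequality. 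The difference-set estimate \eqref{A-A} should be the harder of the two, since Solymosi's theorem is unavailable for differences, forcing one to work through the costlier parameter $d_*(A')$.
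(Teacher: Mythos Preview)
Your plan has two concrete gaps, and you have already half-noticed both of them.

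First, the pigeonholing you borrow from Lemma~\ref{thm:bigenergy} is the wrong one here. It outputs $d_*(A')\lesssim K^2|A|/|A'|$ together with $|A'|\gtrsim |A|K^{-1/3}$; since $|A'|$ appears with a positive exponent in every lower bound you write down, the worst case $|A'|\approx |A|K^{-1/3}$ kills you. (For instance, feeding your $d_*(A')$ into the sharp difference-set bound below gives $|A'-A'|\gtrsim |A'|^{11/5}K^{-6/5}|A|^{-3/5}$, which at $|A'|=|A|K^{-1/3}$ is only $|A|^{8/5}K^{-29/15}$.) The paper instead runs a \emph{second dyadic} pigeonholing on $a\mapsto|A\cap aP|$ rather than a popularity refinement (Lemma~\ref{lem:doublePigeonholing}), retaining $\Delta$ as a free parameter and obtaining
\[
|A'|\gtrsim \frac{|A|^2}{K\Delta},\qquad d_*(A')\lesssim \frac{K|A'|^2}{|A|\Delta}.
\]
The point is that now $d_*(A')$ carries $|A'|^2$ in the numerator, so after dividing by a power of $d_*(A')$ the residual power of $|A'|$ is small and positive, and the lower bound on $|A'|$ together with $\Delta\gg|A|/K$ closes the argument with no optimisation at all.

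Second, the inputs you propose on $A'$ are too weak. Theorem~\ref{hardthm} followed by $|A'\pm A'|\geq |A'|^4/E^+(A')$ only gives $|A'\pm A'|\gtrsim |A'|^{20/13}/d_*(A')^{7/13}$, and your adapted Lemma~\ref{Sh4} gives exactly the same thing; combining these cannot produce an $|A|$-exponent above $20/13$. What the paper uses instead are direct sumset/difference-set bounds in terms of $d_*$: Lemma~\ref{thm:difflem}, $|A-A|\gtrsim |A|^{8/5}/d_*(A)^{3/5}$ (proved via $E_3^+$ and $E_{1.5}^+$ and Lemma~\ref{thm:KSlem}, exactly the ``higher energy plus H\"older'' input you allude to at the end), and Lemma~\ref{thm:sh}, $|A+A|\gtrsim |A|^{58/37}/d_*(A)^{21/37}$, which is a deeper result of Shkredov. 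Your guess that the difference set is the harder case is therefore backwards: it is the sum-set exponent $58/37$ that requires the heavier machinery, while $8/5$ for differences follows from a short argument in the appendix. Finally, note that you cannot ``optimise over the admissible range of $|A'|$'' when proving a lower bound; you must survive the worst case, which is precisely what the double-pigeonholing form of $d_*(A')$ is designed for.
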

 
 In order to prove \eqref{A-A}, we will need the following lemma.



\begin{lemma} \label{thm:difflem} For any finite set $A \subset \mathbb R$,
$$|A-A| \gg \frac{|A|^{8/5}}{d_*^{3/5}(A)\log^{2/5}|A|}.$$
\end{lemma}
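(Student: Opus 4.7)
\textbf{Plan for Lemma \ref{thm:difflem}.}
The plan is to prove $|A-A| \gg |A|^{8/5}/(d_*(A)^{3/5}\log^{2/5}|A|)$ by a dyadic analysis of the level sets of $r_{A-A}$ combined with Lemma \ref{thm:KSlem}. For each dyadic $\tau \in [1,|A|]$, set $D_\tau := \{x \in A-A : \tau \le r_{A-A}(x) < 2\tau\}$. Since $\sum_\tau \tau |D_\tau| \approx |A|^2$, dyadic pigeonholing produces a scale $\tau_0$ with $\tau_0 |D_{\tau_0}| \gg |A|^2/\log|A|$, while Lemma \ref{thm:KSlem} applied with $B=A$ yields $|D_{\tau_0}| \ll |A|^3 d_*(A)/\tau_0^3$. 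Combining these gives $\tau_0 \lesssim (|A| d_*(A) \log|A|)^{1/2}$ and the ``threshold'' bound $|A-A| \ge |D_{\tau_0}| \gg |A|^{3/2}/(d_*(A)^{1/2}\log^{3/2}|A|)$, so the substantive task is to upgrade the numerator from $|A|^{3/2}$ to $|A|^{8/5}$.

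To push the exponent $3/2$ up to $8/5$, the plan is to exploit the additive structure of the level set $D_{\tau_0}$: for each $x \in D_{\tau_0}$, the translate $S_x := A \cap (A+x)$ has cardinality $\approx \tau_0$ and satisfies $S_x - S_x \subseteq A-A$. I would invoke a structural sumset inequality---for example Pl\"unnecke--Ruzsa applied to $A-A$, which gives $|2A-2A| \ll |A-A|^4/|A|^3$---together with a further application of Lemma \ref{thm:KSlem} to an incidence configuration built from the $S_x$'s, and solve the resulting system of constraints to extract an inequality in $|A-A|$ and $d_*(A)$ strictly stronger than the threshold. After optimizing the relevant parameters the claimed $|A|^{8/5}/d_*(A)^{3/5}$ exponent emerges, with the $\log^{2/5}|A|$ factor arising from the dyadic pigeonholing.

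\textbf{Main obstacle.} The exponent improvement from $3/2$ to $8/5$ is the crux of the argument. A direct H\"older or Cauchy--Schwarz applied to the $L^p$-moments of $r_{A-A}$ and bounded via Lemma \ref{thm:KSlem} (equivalently, a layer-cake integration against $\tau^{p-1}$ split at $\tau_* \sim (|A|^3 d_*(A)/|A-A|)^{1/3}$) returns only the $3/2$ threshold for every $p \in (1,3)$; even the deeper estimate $E^+(A) \lesssim d_*(A)^{7/13}|A|^{32/13}$ from Theorem \ref{hardthm} produces only $|A|^{20/13}/d_*(A)^{7/13}$, which falls short of the target whenever $d_*(A) \ll |A|$. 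Reaching $8/5$ therefore appears to require combining Lemma \ref{thm:KSlem} with a genuinely different ingredient---most naturally a Pl\"unnecke-style sumset bound applied to $A-A$ or its iterates, transferring the Szemer\'edi--Trotter input from $A$ to an auxiliary set where the interaction with the trivial bound $|\{r_{A-A} \ge \tau\}| \le |A-A|$ becomes sharper and the resulting implicit inequality in $|A-A|$ yields the stronger exponent.
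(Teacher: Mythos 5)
Your proposal correctly identifies the trivial threshold: a single dyadic application of Lemma \ref{thm:KSlem} to the level sets of $r_{A-A}$, or equivalently any direct H\"older/Cauchy--Schwarz against the moments of $r_{A-A}$, only reaches $|A-A|\gg |A|^{3/2}/(d_*(A)^{1/2}\log^{3/2}|A|)$, and you are also right that Theorem \ref{hardthm} alone gives only $|A|^{20/13}/d_*(A)^{7/13}$. But what you offer to bridge the gap --- Pl\"unnecke--Ruzsa on $A-A$ plus an unspecified incidence configuration on the sets $S_x=A\cap(A+x)$ --- is not a proof; it is a hope that a ``system of constraints'' will yield $8/5$ after optimization. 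You never write the system, and there is no reason on its face to believe Pl\"unnecke--Ruzsa is the missing ingredient. In particular, Pl\"unnecke--Ruzsa converts $|A-A|$ information into information about $|2A-2A|$, but the obstruction you describe is about the \emph{interaction between two different moments} of $r_{A-A}$, which is an orthogonal issue.

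The actual key ingredient, which your plan does not touch, is Li's third-moment inequality (Lemma \ref{thm:li} of the paper): $|A|^2(E_{1.5}^+(A))^2 \le (E_3^+(A))^{2/3}(E_3^+(B))^{1/3}E(A,A-B)$. Taking $B=A$ and combining with H\"older ($|A|^6 \le |A-A|(E_{1.5}^+(A))^2$) gives $|A|^8 \le |A-A|\,E_3^+(A)\,E(A,A-A)$. One then applies Lemma \ref{thm:KSlem} \emph{twice} --- once dyadically to bound the third moment, $E_3^+(A)\ll |A|^3 d_*(A)\log|A|$, and once with an optimized threshold to bound the mixed energy, $E(A,F)\ll |A||F|^{3/2}d_*(A)^{1/2}$ with $F=A-A$. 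Substituting produces $|A|^8 \ll |A-A|^{5/2}|A|^4 d_*(A)^{3/2}\log|A|$, which rearranges to the claimed $|A-A|\gg |A|^{8/5}/(d_*(A)^{3/5}\log^{2/5}|A|)$. It is precisely the asymmetry of Li's inequality --- pairing $E_{1.5}^+$ against $E_3^+$ and the cross-energy $E(A,A-A)$, and the fact that the set $A-A$ whose size we want appears \emph{inside} the mixed energy term --- that lets the $d_*(A)$-input be amortized more efficiently than any single-moment argument allows. Without this inequality (or something playing the same structural role), your plan has a genuine gap: the step from $3/2$ to $8/5$ is asserted rather than carried out.
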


Although this result has not appeared explicitly in the literature, it can be proved by 
essentially copying the arguments from \cite{RRNS} and predecessors with the stronger Lemma
\ref{thm:KSlem} in place of the bound \eqref{KSold}. The proof is included in the appendix for completeness.

The following similar result for sum sets follows from a combination of the work in \cite{Sh2}
and \cite{KS}:
\begin{lemma} \label{thm:sh}
For any finite set $A \subset \mathbb R$,
$$|A+A| \gtrsim \frac {|A|^{58/37}}{d_*^{21/37}(A)}.$$
\end{lemma}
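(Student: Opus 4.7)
The plan is to run Shkredov's proof of the analogous bound $|A+A| \gtrsim |A|^{58/37}/d^{21/37}(A)$ from \cite{Sh2} and then replace $d(A)$ everywhere by $d_*(A)$. The crucial feature of that proof is that $d(A)$ enters only through the Szemer\'{e}di-Trotter-style estimate \eqref{KSold} on level sets of the representation function $r_{A-B}$; the remainder of the argument consists of standard additive-combinatorial manipulations (H\"{o}lder's inequality, Cauchy-Schwarz, Pl\"{u}nnecke-Ruzsa).

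Concretely, I would first re-derive the intermediate energy bounds used in \cite{Sh2} with Lemma \ref{thm:KSlem} in place of \eqref{KSold}. A dyadic decomposition over the level sets $\{x : r_{A-A}(x) \approx 2^j\}$ combined with Lemma \ref{thm:KSlem} yields $E_3^+(A) \lesssim |A|^3 d_*(A) \log |A|$, and a two-parameter version of the same move produces a bound of the form $E^+(A,F) \lesssim |A||F|^{3/2}d_*^{1/2}(A)$ for any finite $F \subset \mathbb R$ (these are exactly the moves used in the proof of Lemma \ref{thm:difflem}). These energy estimates are then fed into the higher-order energy identity from \cite{Sh2}, which relates $|A+A|$, $E_3^+(A)$, and a mixed energy between $A$ and $A+A$; optimizing the resulting inequality recovers the exponents $58/37$ and $21/37$.

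The main obstacle is not mathematical but bookkeeping: one must check that every appearance of $d(A)$ in \cite{Sh2} really does factor through \eqref{KSold}, and not through any further structural property of the quantity $d(A)$. This verification is directly analogous to the one made for the multiplicative-energy analogue in the remark immediately following Theorem \ref{hardthm} above, and the same reasoning applies verbatim here. Since $d_*(A) \leq d(A)$, the resulting inequality is at least as strong as Shkredov's original, establishing Lemma \ref{thm:sh}.
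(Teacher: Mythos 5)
Your approach is essentially the paper's own: the paper observes that \cite{Sh2} proves $|A+A| \gtrsim |A|^{58/37}/D^{21/37}(A)$ whenever $A$ is a Szemer\'{e}di-Trotter set with parameter $D(A)$, and that Lemma \ref{thm:KSlem} (from \cite{KS}) says every $A$ is such a set with $D(A)=O(d_*(A))$, from which Lemma \ref{thm:sh} follows by direct substitution. Your bookkeeping concern about whether $d(A)$ ``factors through'' the level-set bound is therefore already discharged by \cite{Sh2} itself, whose result is formulated abstractly in terms of the Szemer\'{e}di-Trotter parameter rather than $d(A)$ specifically; note, however, that your sketch of the \cite{Sh2} argument (an $E_3^+$--mixed-energy optimization) more closely describes the weaker $|A|^{14/9}$-type bound of \eqref{weaker} and Lemma \ref{thm:difflem} than the more involved argument actually needed to reach the exponent $58/37$.
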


To be more precise, it was proven in \cite{Sh2} that if $A$ is a Szemer\'{e}di-Trotter set with parameter  $D(A)$, then
$|A+A| \gtrsim \frac {|A|^{58/37}}{D^{21/37}(A)}$, and it was subsequently established in \cite{KS} that any set $A$ is a Szemer\'{e}di-Trotter set with $O(d_*(A))$.

In addition, we need the following lemma, which uses the hypothesis that the energy is large in order to find a large subset $A' \subset A$ such that $d_*(A)$ is small.
\begin{lemma}[Double pigeonholing argument]
\label{lem:doublePigeonholing}
Let $A \subset \mathbb R$ and suppose that $E^{\times}(A) \geq \frac{|A|^3}{K}$.
Then there is a subset $A'\subseteq A$ and a number $\Delta \gg |A|/K$ such that
\begin{align}
\label{eq:aprimeLower}|A'|&\gtrsim\frac{|A|^2}{K\Delta}\\
\label{eq:dstarTwo}d_*(A')&\lesssim \frac{K|A'|^2}{|A|\Delta}.
\end{align}
\end{lemma}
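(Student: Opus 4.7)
The plan is to run two successive pigeonholing arguments on the multiplicative structure of $A$, and then read off both conclusions directly from the definition of $d_*(A')$.

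I would start with a dyadic pigeonhole on the quotient representation function. Since $E^{\times}(A) = \sum_x r_{A/A}(x)^2 \geq |A|^3/K$ and $\sum_x r_{A/A}(x) = |A|^2$, dyadically decomposing the range of $r_{A/A}$ produces a threshold $\Delta$ and a level set $P \subseteq A/A$ with $r_{A/A}(x) \approx \Delta$ for every $x \in P$ and $\Delta^2|P| \gtrsim |A|^3/K$. Combined with the trivial $\Delta|P| \leq |A|^2$, this pins $\Delta$ into the required range $\Delta \gtrsim |A|/K$ and gives the crucial lower bound $|P| \gtrsim |A|^3/(K\Delta^2)$.

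Next, I would run a popularity (thresholding) step starting from the double counting identity $\sum_{a\in A} |P\cap aA^{-1}| = \sum_{x\in P} r_{A/A}(x) \geq \Delta|P|$. Letting $A' = \{a \in A : |P\cap aA^{-1}| \geq \Delta|P|/(4|A|)\}$, a routine removal estimate yields $\sum_{a\in A'}|P\cap aA^{-1}| \geq 3\Delta|P|/4$; the trivial upper bound $|P\cap aA^{-1}|\leq |A|$ then delivers \eqref{eq:aprimeLower} immediately as $|A'|\gtrsim \Delta|P|/|A|\gtrsim |A|^2/(K\Delta)$.

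For \eqref{eq:dstarTwo}, I would apply the definition of $d_*(A')$ with the witness triple $Q=P$, $R=A$, $t = \Delta|P|/(4|A|)$: the side condition $\max(|Q|,|R|)\geq |A'|$ holds via $|R|=|A|\geq |A'|$, and by construction $|Q\cap aR^{-1}|\geq t$ for every $a\in A'$. This gives $d_*(A')\leq |P|^2|A|^2/(|A'|t^3)\ll |A|^5/(|A'||P|\Delta^3)$, into which the Step 1 bound on $|P|$ together with \eqref{eq:aprimeLower} itself can be substituted to reach the target form. The main obstacle I anticipate is the algebraic rearrangement at the end of this step: the shape dropping out of the definition of $d_*$ naturally involves $|A|^2/|A'|$ rather than $|A'|^2/|A|$, so trading these factors against each other requires careful use of \eqref{eq:aprimeLower} (and a clean accounting of the $\log|A|$ losses from the dyadic step in Step 1, to be absorbed into the $\lesssim$ notation).
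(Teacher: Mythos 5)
Your Step~1 matches the paper's, and the structure of Step~2 is close, but there is a genuine gap at Step~3 that \eqref{eq:aprimeLower} cannot repair. The paper's second step is a genuine \emph{dyadic} pigeonhole over $a\in A$, producing a subset $A'$ and a scale $t$ with $t\leq|A\cap aP|<2t$ for all $a\in A'$ and $\sum_{a\in A'}|A\cap aP|\gtrsim |P|\Delta$. The two-sided control is essential: it gives the relation $|A'|t\gtrsim |P|\Delta$ with the \emph{actual} scale $t$, and in the $d_*$ estimate one multiplies $\frac{|A|^2|P|^2}{|A'|t^3}$ by $\bigl(\frac{|A'|t}{|P|\Delta}\bigr)^3\gtrsim 1$, which cancels $t^3$ entirely and produces $\frac{|A|^2|A'|^2}{|P|\Delta^3}\lesssim\frac{K|A'|^2}{|A|\Delta}$.

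Your popularity step only gives a lower bound $|P\cap aA^{-1}|\geq t_0:=\Delta|P|/(4|A|)$ for $a\in A'$, with no matching upper bound at the scale of $t_0$. Substituting $t=t_0$ into $d_*(A')\leq\frac{|P|^2|A|^2}{|A'|t^3}$ yields $\frac{64|A|^5}{|A'||P|\Delta^3}$, and after $|P|\gtrsim|A|^3/(K\Delta^2)$ this is $\lesssim\frac{K|A|^2}{|A'|\Delta}$ --- off from the target $\frac{K|A'|^2}{|A|\Delta}$ by a factor of $(|A|/|A'|)^3$, which can be large. Trying to close this with \eqref{eq:aprimeLower} gives $d_*(A')\lesssim K^2$, and comparing with the target requires $K\Delta\lesssim|A|$, which is not guaranteed (one only knows $\Delta\gg|A|/K$, and $\Delta$ may be as large as $|A|$). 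The relation you would need, $|A'|t_0\gtrsim|P|\Delta$, reduces to $|P|\Delta\gtrsim|A|^2$, which goes in the wrong direction --- trivially $|P|\Delta\leq|A|^2$. The fix is to replace the single-threshold popularity step by a second dyadic pigeonhole, exactly as in the paper, so that $t$ is a true level and $|A'|t\gtrsim|P|\Delta$ holds.
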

The proof of Lemma~\ref{lem:doublePigeonholing} is similar to the refinement step in the proof of Lemma \ref{thm:bigenergy}.
\begin{proof}
By the popularity principle and dyadic pigeonholing there is a subset $P\subseteq A/A$ and a number $\Delta\geq |A|/2K$ such that for all $x$ in $P$
\[
\Delta\leq |A\cap xA| < 2\Delta,
\]
and
\[
\sum_{x\in P}|A\cap xA|^2 \gtrsim E^\times(A).
\]

Now we perform a second dyadic pigeonholing argument.
Note that
\[
\sum_{a\in A}|A\cap aP| = \sum_{x\in P}|A\cap xA| \geq |P|\Delta.
\]
Thus there exists a subset $A'\subseteq A$ and a number $0 < t\leq |A|$ such that for all $a$ in $A'$
\[
t\leq |A\cap aP| < 2t
\]
and
\[
\sum_{a\in A'}|A\cap aP| \gtrsim |P|\Delta,
\]
hence
\[
|A'|t\gtrsim |P|\Delta.
\]
Since $t\leq |A|$ and $|P|\Delta^2\gtrsim |A|^3/K$ we have
\[
|A'|\gtrsim\frac{|P|\Delta}{|A|} \gtrsim \frac{|A|^2}{K\Delta},
\]
which proves \eqref{eq:aprimeLower}.

For every $a\in A'$ we have $|A\cap aP|\geq t$.
Therefore we can take
\[
t=t,\quad Q=A,\quad\mbox{and}\quad R=P^{-1}
\]
in the definition of $d_*(A')$.
We then have
\begin{align*}
d_*(A') &\leq \frac{|A|^2|P|^2}{|A'|t^3}\\
&\lesssim  \frac{|A|^2|P|^2}{|A'|t^3}\cdot \left(\frac{|A'|t}{|P|\Delta} \right)^3\\
&= \frac{|A|^2|A'|^2}{|P|\Delta^3}
= \frac{|A|^2|A'|^2}{(|P|\Delta^2)\Delta}
\lesssim \frac{K|A'|^2}{|A|\Delta},
\end{align*}
which proves \eqref{eq:dstarTwo}.
\end{proof}

 \begin{proof}[Proof of Lemma \ref{thm:d*}] 
Similar to the proof of Lemma \ref{thm:bigenergy}, the idea here is to use the double pigeonholing argument (Lemma~\ref{lem:doublePigeonholing}) to find a large subset $A' \subset A$ such that $d_*(A)$ is small, and to then apply Lemmas \ref{thm:difflem} and \ref{thm:sh} to complete
the proof.

Since $E^\times(A)\geq |A|^3/K$, by Lemma~\ref{lem:doublePigeonholing} there is a subset $A'\subseteq A$ and a number $\Delta\gg |A|/K$ such that
\begin{align*}
|A'|&\gtrsim\frac{|A|^2}{K\Delta}\\
d_*(A')&\lesssim \frac{K|A'|^2}{|A|\Delta}.
\end{align*}

Applying Lemma~\ref{thm:difflem} yields
\begin{align*}
|A-A|\geq |A'-A'|
&\gtrsim \frac{|A'|^{8/5}}{d_*(A')^{3/5}}\\
&\gtrsim |A'|^{8/5}\cdot \frac{|A|^{3/5}\Delta^{3/5}}{|A'|^{6/5}K^{3/5}}
= \frac{|A'|^{2/5}|A|^{3/5}\Delta^{3/5}}{K^{3/5}}\\
&\gtrsim \frac{|A|^{4/5}}{K^{2/5}\Delta^{2/5}} \cdot \frac{|A|^{3/5}\Delta^{3/5}}{K^{3/5}}
= \frac{|A|^{7/5}}{K}\Delta^{1/5}\\
&\gg \frac{|A|^{8/5}}{K^{6/5}}.
\end{align*}

Applying Lemma~\ref{thm:sh} yields
\begin{align*}
|A+A|\geq |A'+A'|&\gtrsim \frac{|A'|^{58/37}}{d_*^{21/37}(A')}\\
&\gtrsim |A'|^{58/37}\cdot\frac{|A|^{21/37}\Delta^{21/37}}{|A'|^{42/37}K^{21/37}}
=\frac{|A'|^{16/37}|A|^{21/37}\Delta^{21/37}}{K^{21/37}}\\
&\gtrsim \frac{|A|^{32/37}}{K^{16/37}\Delta^{16/37}} \cdot \frac{|A|^{21/37}\Delta^{21/37}}{K^{21/37}}
=\frac{|A|^{53/37}}{K}\Delta^{5/37}\\
&\gg\frac{|A|^{58/37}}{K^{42/37}}.
\end{align*}
\end{proof}

We are now ready to prove the new lower bounds for $A(A-A)$ and $A(A+A)$.

 \begin{proof}[Proof of Theorem \ref{thm:main2}]
Write $E^{\times}(A)=\frac{|A|^3}{K}$.
By Corollary \ref{pinned}
\begin{equation}\label{f:first_bound1}
    |A(A+A)|\gg K^{1/2} |A|^{3/2} \log^{-1/2} |A| \gtrsim K^{1/2} |A|^{3/2} \,.
\end{equation}
On the other hand, Lemma \ref{thm:d*} implies that
\begin{equation}
|A(A+A)| \geq |A+A| \gtrsim \frac{|A|^{58/37}}{K^{42/37}}
\label{f:second_bound2}
\end{equation}
Optimizing over (\ref{f:first_bound1}) and (\ref{f:second_bound2}), we obtain
$$
    |A(A+A)| \gtrsim |A|^{3/2 + 5/242} 
$$
as required.  

\end{proof}

\begin{proof}[Proof of Theorem \ref{thm:main3}]
Write $E^{\times}(A)=\frac{|A|^3}{K}$.
By Corollary \ref{pinned}
\begin{equation}\label{f:first_bound11}
    |A(A-A)|\gg K^{1/2} |A|^{3/2} \log^{-1/2} |A| \gtrsim K^{1/2} |A|^{3/2} \,.
\end{equation}
On the other hand, Lemma \ref{thm:d*} implies that
\begin{equation}
|A(A-A)| \geq |A-A| \gtrsim \frac{|A|^{8/5}}{K^{6/5}}
\label{f:second_bound22}
\end{equation}
Optimizing over (\ref{f:first_bound11}) and (\ref{f:second_bound22}), we obtain
$$
    |A(A-A)| \gtrsim  |A|^{3/2 + 1/34} 
$$
as required.  

\end{proof}

\section{Five variable expander} \label{sec:5var}
In this section we will prove Theorem \ref{thm:main4}, based on the proof in \cite{MRNS} of the inequality
\begin{equation}
|A(A+A+A+A)| \gg \frac{ |A|^2}{\log |A|}.
\label{A(A+A+A+A)}
\end{equation}
The proof of \eqref{A(A+A+A+A)} in \cite{MRNS} follows from comparing the upper bound on the multiplicative energy in Theorem \ref{thm:soly} with the lower bound in Lemma \ref{thm:oldmainlemma}. Here, we need a suitable analogue of Lemma \ref{thm:oldmainlemma}, the proof of which relies on the following celebrated result of Guth and Katz \cite{GK}.
\begin{theorem} \label{thm:GK}
For any finite set $A \subset \mathbb R$, the number of solutions to the equation
$$(a_1-a_2)^2+(a_2-a_4)^4=(a_5-a_6)^2+(a_7-a_8)^2$$
such that $a_1,\dots,a_8 \in A$ is at most $O(|A|^6\log|A|)$.
\end{theorem}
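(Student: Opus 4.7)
The statement is essentially a direct reformulation of the main theorem of Guth--Katz \cite{GK}, applied to the point set $A\times A\subset \mathbb{R}^2$. (I will also read the exponent $4$ in $(a_2-a_4)^4$ as a typo for $(a_3-a_4)^2$, since otherwise the expression is dimensionally wrong and the Guth--Katz framework does not apply.) So my plan is to exhibit the claimed count as an instance of the bound on the number of pairs of pairs of points determining the same distance in a planar point set.

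First I would set $P := A\times A$, a set of $N := |A|^2$ points in $\mathbb{R}^2$, and for each real $d\geq 0$ let $n(d)$ denote the number of ordered pairs of points in $P$ at squared Euclidean distance equal to $d$. Since the squared distance between $(a_1,a_3)$ and $(a_2,a_4)$ is $(a_1-a_2)^2+(a_3-a_4)^2$, the number of solutions to the displayed equation
\[
(a_1-a_2)^2+(a_3-a_4)^2=(a_5-a_6)^2+(a_7-a_8)^2, \qquad a_i\in A,
\]
is precisely $\sum_d n(d)^2$, i.e.\ the number of ordered quadruples of points in $P$ forming two pairs at equal distance.

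Next I would invoke Guth--Katz in its quantitative form: for any set of $N$ points in the plane,
\[
\sum_d n(d)^2 \;=\; O(N^3 \log N).
\]
This is the key intermediate estimate in \cite{GK} (equivalently, their bound on the number of pairs of points determining a given distance, summed and squared); the distinct-distances lower bound $|\Delta(P)|\gg N/\log N$ follows from it by Cauchy--Schwarz via $|\Delta(P)|\cdot \sum_d n(d)^2 \geq (\sum_d n(d))^2 = N^2(N-1)^2$. Substituting $N=|A|^2$ gives
\[
\sum_d n(d)^2 \;\ll\; (|A|^2)^3\log(|A|^2) \;\ll\; |A|^6\log|A|,
\]
which is exactly the claimed bound.

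Since the theorem is cited as a black box, there is no real obstacle in this proof beyond correctly identifying the equation as an equal-distance condition on $A\times A$; all the difficulty is buried in the polynomial partitioning / ruled surface argument of \cite{GK}. The only minor care needed is to confirm the typo interpretation $(a_3-a_4)^2$, which one can double-check against the use of this theorem later in the paper where it presumably must match the form $(a_1-a_2)^2+(a_3-a_4)^2$ arising from squared planar distances.
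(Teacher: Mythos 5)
Your proposal is correct and matches the paper's treatment: the paper also states this result as a direct black-box consequence of Guth--Katz applied to the grid $A\times A$ (with the same reading of the obvious typo, $(a_3-a_4)^2$ in place of $(a_2-a_4)^4$), giving $\sum_d n(d)^2\ll N^3\log N$ with $N=|A|^2$. Your explicit identification of the equation with the equal-distance quadruple count on $A\times A$ is exactly the intended reduction.
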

Theorem \ref{thm:GK} is a special case of a more general geometric result which immediately implies a resolution of the Erd\H{o}s distinct distances problem up to logarithmic factors, but here it is stated only in the form in which it
will be used in this paper.

Theorem \ref{thm:GK} can be used to prove the following variation of Lemma \ref{thm:oldmainlemma}:

\begin{lemma} \label{thm:GKcor} For any finite sets $A,B \subset \mathbb R$,
$$E^+(A)|\{a+(b_1+b_2)^2 : a \in A, b_1,b_2 \in B\}|^2 \gg \frac{|A|^4|B|^2}{\log |B|}.$$
\end{lemma}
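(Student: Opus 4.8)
The plan is to run a Cauchy--Schwarz/energy argument in direct analogy with the proof of Lemma \ref{thm:oldmainlemma} from \cite{MRNS} (or of Lemma \ref{Sh4} above), but replacing the multiplicative structure with the additive one and using the Guth--Katz bound (Theorem \ref{thm:GK}) in place of the Szemer\'edi--Trotter collinear-triples estimate. Write $S:=\{a+(b_1+b_2)^2 : a\in A,\ b_1,b_2\in B\}$. The idea is to count, in two ways, the number $N$ of solutions to the equation
\[
a_1 + (b_1+b_1')^2 = a_2 + (b_2+b_2')^2
\]
with $a_1,a_2\in A$ and $b_1,b_1',b_2,b_2'\in B$. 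On the one hand, a pointwise count gives $N=\sum_{t\in S} m(t)^2$, where $m(t)=|\{(a,b,b')\in A\times B\times B : a+(b+b')^2=t\}|$; since $\sum_t m(t) = |A||B|^2$ and $m$ is supported on $S$, Cauchy--Schwarz yields $N\geq |A|^2|B|^4/|S|$.

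On the other hand I would bound $N$ from above. Rearranging the defining equation as $(b_1+b_1')^2 - (b_2+b_2')^2 = a_2 - a_1$, the number of solutions is a sum over $x\in A-A$ of $r_{A-A}(x)$ times the number of ways to write $x$ as a difference of two squares of elements of $B+B$. Applying Cauchy--Schwarz in the variable $x$ splits $N$ into $E^+(A)^{1/2}$ times the square root of the number of solutions to $(b_1+b_1')^2 - (b_2+b_2')^2 = (b_3+b_3')^2 - (b_4+b_4')^2$ with all $b_i\in B$. This last quantity is precisely (a substitution-instance of) the quantity controlled by Theorem \ref{thm:GK}: setting the difference variables to range over $B+B\subseteq 2B$ and rewriting $u^2-v^2 = w^2-z^2$ as $u^2+z^2 = w^2+v^2$, Guth--Katz gives a bound of $O(|B|^8\log|B|)$ (each element of $B+B$ has at most $|B|$ representations, contributing an extra $|B|^4$ over the $|2B|^4$ count, or one simply applies Theorem \ref{thm:GK} directly to the set of sums). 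Hence $N\lesssim E^+(A)^{1/2}|B|^4\log^{1/2}|B|$, and one must also absorb the degenerate terms coming from the substitution $b_i'' = $ something where $B+B$ is not injective, but these are lower order.

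Combining the two bounds gives $|A|^2|B|^4/|S| \lesssim E^+(A)^{1/2}|B|^4\log^{1/2}|B|$, i.e. $|S|^2 \cdot E^+(A) \gg |A|^4|B|^2/\log|B|$ after squaring and rearranging, which is the claimed inequality. The main obstacle I anticipate is the bookkeeping around the map $(b_1,b_1')\mapsto b_1+b_1'$: passing from sums of squares of elements of $B+B$ to the hypothesis of Theorem \ref{thm:GK} (which is stated for differences $a_i-a_j$ with $a_i\in A$) requires either (i) observing that $B+B$ has a representation function bounded by $|B|$ and paying a factor $|B|^4$, or (ii) directly re-deriving the Guth--Katz estimate for the set $2B$ or for an arbitrary finite set; the cleanest route is to apply Theorem \ref{thm:GK} to the set $B$ itself after substituting $b_i+b_i' \to b_i - (-b_i')$ and noting $-B$ has the same size, so that the count of solutions to $(b_1+b_1')^2+(b_4+b_4')^2=(b_3+b_3')^2+(b_2+b_2')^2$ is at most the number of solutions counted by Theorem \ref{thm:GK} for $A\cup(-A)$ with $A=B$, up to constants. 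I would also need to double-check that the error terms (solutions where some $b_i+b_i'=0$, or where the Cauchy--Schwarz step loses a multiplicative constant) are genuinely dominated by the main term, which should follow from the trivial bound $|S|\leq |A||B|^2$.
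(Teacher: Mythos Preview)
Your approach is the same as the paper's: double count solutions to $a_1+(b_1+b_1')^2=a_2+(b_2+b_2')^2$, get the lower bound $N\geq |A|^2|B|^4/|S|$ by Cauchy--Schwarz, and get the upper bound by Cauchy--Schwarz together with Theorem~\ref{thm:GK}. The only substantive issue is a numerical slip in your application of Guth--Katz.

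The quantity you need to bound is
\[
\sum_x r_{(B+B)^2-(B+B)^2}^2(x)
=\bigl|\{(b_1,\dots,b_8)\in B^8 : (b_1+b_2)^2+(b_7+b_8)^2=(b_5+b_6)^2+(b_3+b_4)^2\}\bigr|.
\]
Your own ``cleanest route'' is the right one: writing $b_i+b_j=b_i-(-b_j)$ and applying Theorem~\ref{thm:GK} to the set $B\cup(-B)$ gives this count as $O(|B|^6\log|B|)$, \emph{not} $O(|B|^8\log|B|)$. Consequently $N\ll E^+(A)^{1/2}|B|^3\log^{1/2}|B|$, not $E^+(A)^{1/2}|B|^4\log^{1/2}|B|$ as you wrote. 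This matters: with your stated exponent the $|B|^4$ cancels on both sides and the rearrangement only yields $E^+(A)|S|^2\gg |A|^4/\log|B|$, which is weaker than the lemma by a factor of $|B|^2$. With the correct exponent $|B|^3$ the rearrangement gives exactly $E^+(A)|S|^2\gg |A|^4|B|^2/\log|B|$. Your option~(i), paying $|B|^4$ for the representation multiplicities and then appealing to Guth--Katz for $B+B$, is both lossy and not directly justified by the form of Theorem~\ref{thm:GK}; just use the $B\cup(-B)$ route.

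Finally, the ``degenerate terms'' and ``error terms'' you worry about do not arise here: there is no division, and the Cauchy--Schwarz steps are exact inequalities, so no separate treatment of edge cases is needed.
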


\begin{proof}
The proof proceeds by the familiar method of double counting the number of solutions
to the equation
\begin{equation}
a_1+(b_1+b_2)^2=a_2+(b_3+b_4)^2
\label{eqS}
\end{equation}
such that $a_i \in A$ and $b_i \in B$.  Let $S$ denote the number of solutions to \eqref{eqS} and write
$$A+(B+B)^2:=\{a+(b_1+b_2)^2 : a \in A, b_1,b_2 \in B \}.$$
By the Cauchy-Schwarz inequality
$$S \geq \frac{|A|^2|B|^4}{|A+(B+B)^2|}.$$
On the other hand, also by the Cauchy-Schwarz inequality,
\begin{align*}
S^2&=\left( \sum_{x} r_{A-A}(x)r_{(B+B)^2-(B+B)^2}(x) \right ) ^2
\\ & \leq \left( \sum_{x} r_{A-A}^2(x) \right ) \left(\sum_x r_{(B+B)^2-(B+B)^2}^2(x) \right )
\\&=E^+(A)  \left(\sum_x r_{(B+B)^2-(B+B)^2}^2(x) \right ).
\end{align*}
Theorem \ref{thm:GK} tells us that $ \left(\sum_x r_{(B+B)^2-(B+B)^2}^2(x) \right )=O(|B|^6\log |B|)$. Therefore,
\begin{align*}
|A|^4|B|^8 &\leq |A+(B+B)^2|^2S^2
\\ & \ll |A+(B+B)^2|^2 E^+(A) |B|^6\log |B|.
\end{align*}
After rearranging this inequality, we obtain the desired result.
\end{proof}

Unfortunately, we are not aware of a proof of Lemma \ref{thm:GKcor} which does not use the deep
results from \cite{GK}.

We are now ready to prove Theorem \ref{thm:main4}.

\begin{proof}[Proof of Theorem \ref{thm:main4}]
Apply Lemma \ref{thm:GKcor} with $A=\log A$ and $B=A+A$. We have
$$E^+(\log A)|\{(a_1+a_2+a_3+a_4)^2+\log a_5 : a_i \in A \}|^2 \gg \frac{|A|^4|A+A|^2}{\log |A|}.$$
Note that $\log a_1 + \log a_2= \log a_3 + \log a_4$ if and only if $a_1a_2=a_3a_4$, and so $E^+(\log A)=E^{\times}(A)$. We can apply Theorem \ref{thm:soly} to deduce that
$$E^+(\log A) \ll |A+A|^2 \log |A|.$$
It then follows that
$$ |\{(a_1+a_2+a_3+a_4)^2+\log a_5 : a_i \in A \}|^2 \gg \frac{|A|^4}{\log^2 |A|},$$
which completes the proof.

\end{proof}

\section*{Appendix}

The purpose of this appendix is to present a formal proof of Lemma \ref{thm:difflem}. We will call upon the following result on the relationship between different types of
energy.
\begin{lemma}[\cite{Li}, Lemma 2.4 and 2.5] \label{thm:li}
For any finite sets $A,B \subset \mathbb R$
$$|A|^2(E_{1.5}^+(A))^2 \leq (E_3^+(A))^{2/3} (E_3^+(B))^{1/3}E(A,A-B).$$
\end{lemma}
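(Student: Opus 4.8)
The plan is to prove Lemma~\ref{thm:li}: for any finite sets $A,B\subset\mathbb R$,
$$|A|^2(E_{1.5}^+(A))^2 \leq (E_3^+(A))^{2/3}(E_3^+(B))^{1/3}E(A,A-B).$$
The starting point is to open up $E_{1.5}^+(A)=\sum_x r_{A-A}^{3/2}(x)$ and interpret $E(A,A-B)=E^+(A,A-B)$ as a count of solutions to $a_1-s_1=a_2-s_2$ with $a_i\in A$, $s_i\in A-B$. The natural approach is a weighted Cauchy--Schwarz argument in the style of the standard ``third energy'' inequalities (the $E_3$-type estimates going back to work on additive energy, e.g. the arguments used for the $E_{3/2}$/$E_3$ relations in the sum-product literature). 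First I would write $|A|r_{A-A}(x)=\sum_{y}r_{A-A}(y)r_{A-A}(x-y)$-type convolution identities, or more directly express $r_{A-A}(x)$ as $\sum_{b\in B} r_{A-(A-b)}$-style decompositions so that the shift set $A-B$ enters.

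Concretely, the key step is to bound $|A|\cdot E_{1.5}^+(A)$ from above. One writes
$$|A|\cdot E_{1.5}^+(A) = \sum_{x} |A|\, r_{A-A}(x)\, r_{A-A}^{1/2}(x) = \sum_{x}\sum_{b\in B} r_{A-A}(x)\, r_{(A-b)-A}^{\,}\cdots$$
more carefully: since $r_{A-A}(x)=\sum_{b\in B}\mathbf 1$, no — instead use $|A| r_{A-A}(x) = \sum_{b} r_{A-A}(x)$ summed trivially over $b\in A$, giving a factor $|A|$, and then pair one copy of $A$ with $b$ to form $A-b\subset A-B$. That is, write $|A|r_{A-A}(x)=\sum_{b\in A} r_{A-A}(x)$ and note $r_{A-A}(x)=r_{(A-b)-(A-b)}(x)$, and then split $r_{(A-b)-(A-b)}(x)=\sum_{s} r_{A-(A-b)}(s)\,\mathbf 1[\text{stuff}]$. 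Applying Cauchy--Schwarz in $b$ and in an intermediate variable, with the weights $r_{A-A}^{1/2}(x)$ and $r_{A-A}(x)$ distributed so that one side collects $\sum_x r_{A-A}^3(x)=E_3^+(A)$ (for $A$, and similarly for $B$ via the $(A-b)$ shifts, producing $E_3^+(B)^{1/3}$) and the other collects the mixed energy $E(A,A-B)$. The exponents $2/3$ and $1/3$ on the $E_3$ terms are exactly what a triple application / Hölder splitting of the convolution produces, and $|A|^2$ on the left is the square of the trivial factor $|A|$ introduced at the start.

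The main obstacle, and the step requiring the most care, is the precise bookkeeping of which variable the Cauchy--Schwarz (or Hölder) inequality is applied over, and ensuring the shift structure $A-B$ is introduced in a way that the combinatorial count genuinely reduces to $E(A,A-B)$ rather than to something larger. In particular one must be careful that after squaring $|A|E_{1.5}^+(A)$ and applying Cauchy--Schwarz, one factor is literally $\sum_x r_{A-A}^3(x)=E_3^+(A)$ (possibly with an exponent from a Hölder split) and the other is a count of quadruples that is bounded by $E_3^+(B)^{1/3}E(A,A-B)$ after a further Hölder step; getting the three exponents $2/3,1/3$ and the single power of $E(A,A-B)$ to balance is the delicate part. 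Once the combinatorial identity and the choice of Hölder exponents are pinned down, the rest is routine. Since the excerpt attributes this lemma to \cite{Li}, Lemmas 2.4 and 2.5, I would in fact assemble it from two sub-steps mirroring that structure: first an inequality of the form $|A|(E_{1.5}^+(A))\le (E_3^+(A))^{1/3}(\cdots)^{2/3}$ relating $E_{1.5}$ to $E_3$ and a mixed energy, then a second inequality bounding that mixed energy by $(E_3^+(B))^{1/3}E(A,A-B)^{1/2}$-type quantities, and finally multiply/square them together to land on the stated form.
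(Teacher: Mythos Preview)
The paper does not give a proof of this lemma; it is quoted from \cite{Li} (Lemmas~2.4 and~2.5 there) and used as a black box in the appendix proof of Lemma~\ref{thm:difflem}. There is therefore nothing in the present paper to compare your argument against.

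That said, what you have written is not a proof but a plan, and the plan never becomes concrete. You are right that the argument is a Cauchy--Schwarz/H\"older manipulation on representation functions, and right that one introduces the second set by summing a trivial identity over its elements so as to manufacture the shifted set $A-B$ and hence the mixed energy $E(A,A-B)$. But the decisive step---the actual identity that links $|B|\,r_{A-A}(x)$ (or $|A|\,r_{A-A}(x)$) to a sum involving $A-B$---is never written down; your attempted identities (``$|A|r_{A-A}(x)=\sum_{b\in A}r_{A-A}(x)$'' and ``$r_{A-A}(x)=r_{(A-b)-(A-b)}(x)$'') are tautologies that do not introduce $A-B$ in any useful way. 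You then explicitly flag the bookkeeping and the balancing of the H\"older exponents as ``the main obstacle'' and ``the delicate part'' without resolving either. The missing ingredient is the observation that if $a\in A\cap(A+x)$ and $b\in B$ then $a-b\in(A-B)\cap((A-B)+x)$, which lets one bound $|B|\,r_{A-A}(x)$ by a convolution over $A-B$; Cauchy--Schwarz then separates an $E_3^+$ factor from an $E(A,A-B)$ factor, and a further H\"older step produces the $2/3$--$1/3$ split between $E_3^+(A)$ and $E_3^+(B)$. Until that identity and the subsequent inequalities are actually carried out, there is no proof here.
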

In fact, Lemma \ref{thm:li} holds for any abelian group.

\begin{proof}[Proof of Lemma \ref{thm:difflem}] 
Recall that Lemma \ref{thm:difflem} states that for any finite set $A \subset \mathbb R$,
$$|A-A| \gg \frac{|A|^{8/5}}{d_*^{3/5}(A)\log^{2/5}|A|}.$$
In order to prove this, we will first prove two energy bounds. Note that, by Lemma \ref{thm:KSlem}, 
\begin{align} \label{E3}
\begin{split}
E_3^+(A) &=  \sum_{x} r_{A-A}^3(x)
\\&=\sum_{j \geq 1}  \sum_{x:2^{j-1} \leq r_{A-A}(x) < 2^j} r_{A-A}^3(x)
\\& \ll |A|^3d_*(A) \log |A|.
\end{split}
\end{align}

Similarly, for any $F \subset \mathbb R$, and a parameter $\triangle >0$
\begin{align} \label{Emix2}
\begin{split}
E^+(A,F) &=  \sum_{x} r_{A-F}^2(x)
\\&= \sum_{x:r_{A-F}(x) \leq \triangle} r_{A-F}^2(x)+\sum_{j \geq 1}  \sum_{x:2^{j-1}\triangle \leq r_{A-A}(x) < 2^j\triangle} r_{A-F}^2(x)
\\& \ll \triangle|A||F|+ \frac{|A||F|^2d_*(A)}{\triangle}.
\end{split}
\end{align}
We choose $\triangle = (|F|d_*(A))^{1/2}$, and thus conclude that
\begin{equation}
E(A,F) \ll |A||F|^{3/2}d_*(A)^{1/2}.
\label{Emix}
\end{equation}

Now, by H\"{o}lder's inequality
\begin{align} \label{E1.5}
\begin{split}
|A|^6&=  \left ( \sum_{x \in A-A} r_{A-A}(x) \right )^3
\\& \leq  |A-A|\left (\sum_{x} r_{A-A}^{3/2}(x) \right)^2
\\& = |A-A|(E_{1.5}^+(A))^2.
\end{split}
\end{align}
Applying \eqref{E1.5} with Lemma \ref{thm:li}, as well as inequalities \eqref{E3} and \eqref{Emix}, we have
\begin{align*}
|A|^8& \leq    |A-A|(E_{1.5}^+(A))^2 |A|^2
\\& \leq  |A-A|E_3^+(A)E(A,A-A)
\\& = |A-A|^{5/2}|A|^4d_*(A)^{3/2}\log |A|.
\end{align*}
Rearranging this inequality completes the proof.
\end{proof} 

Finally, we note that a similar method can be used to prove a quantitatively weaker version of Lemma \ref{thm:sh}, in the form of the following result:
\begin{equation}
|A+A| \gg \frac{|A|^{14/9}}{d_*^{5/9}(A)\log^{2/9}|A|}.
\label{weaker}
\end{equation}
To see how this works, one can repeat
the arguments from the proof of Theorem 1.2 in \cite{LRN}, but using Lemma \ref{thm:KSlem} in place of Lemma 3.2 from \cite{LRN}. This is worth noting, since the proofs of the main results in \cite{KS} and \cite{KS2}, that is
the bound 
$$\max \{|A+A|,|AA|\} \gg |A|^{4/3+c}$$
for some $c>0$, both include applications of Lemma \ref{thm:sh}. One can also obtain this sum-product
estimate, albeit with a smaller positive value $c$, by using the bound \eqref{weaker} instead of
Lemma \ref{thm:sh}.

\section*{Acknowledgement}


The second author is grateful for the hospitality of the R\'{e}nyi Institute, where part of this work was conducted with support from Grant ERC-AdG. 321104, and also for the support of the Austrian Science Fund FWF Project F5511-N26,
which is part of the Special Research Program "Quasi-Monte Carlo Methods: Theory and Applications". 
We thank Antal Balog, Orit Raz and Endre Szemer\'{e}di for helpful discussions.

\end{document}